\newcommand{\no}[1]{#1}
\renewcommand{\no}[1]{}  \newcommand{\upDelta}{\Delta} 
\date{\today}
\numberwithin{equation}{section}%
\newtheorem{theorem}{Theorem}[section]
\newtheorem{proposition}{Proposition}[section]
\newtheorem{lemma}{Lemma}[section]
\newtheorem{definition}{Definition}[section]
\newtheorem{remark}{Remark}[section]
\theoremstyle{definition}
\DeclareMathOperator{\supp}{supp}
\DeclareMathOperator{\curl}{curl}
\newcommand{\R}{{\bf R}}
\newcommand{\C}{{\bf C}}
\newcommand{\A}{{\bf A}}
\newcommand{\be}[1]{\begin{equation}\label{#1}}
\newcommand{\ee}{\end{equation}}
\renewcommand{\d}{\mathrm{d}}
\renewcommand{\r}[1]{(\ref{#1})}
\renewcommand{\i}{\mathrm{i}}
\title[Inverse Scattering and Stability Estimates for The Biharmonic Operator]{Inverse Scattering and Stability Estimates for The Biharmonic Operator}
\author[S. RabieniaHaratbar]{Siamak RabieniaHaratbar}
\address{Department of Mathematics, Purdue University, West Lafayette, IN 47907}
\thanks{The author is partially supported by a NSF DMS grants No. 1600327 and 1900475}
\thanks{}
\begin{document}
\maketitle 
\vspace{-10mm}
\begin{abstract}
	We study an inverse scattering problem of a perturbed biharmonic operator. we show that the high-frequency asymptotic of scattering amplitude of the biharmonic operator uniquely determines $\curl \A$ and $V-\frac{1}{2}\nabla\cdot\A$. We also study the near-field scattering problem and show that the high-frequency asymptotic expansion up to certain error in terms of frequency $\lambda$ recovers the same two above quantities with no additional information about $\A$ and $V$. We also prove stability estimates for $\curl \A$ and $V-\frac{1}{2}\nabla\cdot\A$. 
\end{abstract}
 
\vspace{-2mm}
\section{Introduction}
	Consider the following biharmonic equation:
	\vspace{-1mm}
\be{1.1}
	(P - \lambda^4)u= (\Delta ^2 + \A\cdot\nabla + V -\lambda ^4)u=0. \vspace{-1mm}
\ee
	where $\Delta$ is Laplacian and $\cdot$ is the dot product $a\cdot b$ in $\R^n.$
	Here $\A$ is a vector-valued function representing the magnetic field and $V$ is a scalar-valued function representing the potential function, with both $\A$ and $V$ regular enough and compactly supported. 

	The scattering and the inverse scattering problems for the Schr\"{o}dinger operators have long history, see [Lax-Reed]. One major application of the operator \r{1.1} is in the study of the theory of vibrations of beams and the elasticity theory, see \cite{1} for the case of the linear beam equation and \cite{7} for the nonlinear scattering problems. In a  work by Tyni and Serov \cite{15}, a Saito's type formula has been proved and it is shown that one can uniquely recover $V-\frac{1}{2}\nabla\cdot\A$, where $\A \in W_{p,2\delta} ^1(\R^n)$ and $V \in L^p _{2\delta}(\R^n)$. 
	
	Our goal in this paper is to find out what information about $\A$ and $V$ can be recovered from the high-frequency asymptotic of scattering amplitude. We do not consider the whole amplitude at zero or near zero frequencies (Calderon's problem). In the case of inverse boundary valued problem at zero frequency, one can fully reconstruct the potential $V$ and magnetic field $\A$, and in particular there is no gauge invariance (see \cite{4, 3, 5, 14}).
	The main result of this work is the following.
	\vspace{-1mm}
\begin{theorem}
	Let $V \text{and}\ \A \in C^k(\R^3)$ for $k$ large enough. Then the high-frequency asymptotic expansion of scattering amplitude $a(\omega,\theta,\lambda)$ up to $\mathcal{O}(\lambda^{-3})$ recovers $\curl \A$ and $V-\frac{1}{2}\nabla\cdot\A$ uniquely. 
\end{theorem}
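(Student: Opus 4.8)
The plan is to run the high-frequency Born (Neumann) series for the Lippmann--Schwinger equation attached to \r{1.1}, show that after the first term everything is $\mathcal{O}(\lambda^{-3})$, and then read the two claimed combinations off the $\lambda^{-1}$ and $\lambda^{-2}$ coefficients of the leading term. First I would factor the free biharmonic resolvent,
\[
(\Delta^2-\lambda^4)^{-1}=\frac{1}{2\lambda^2}\Bigl[\,R^+(\lambda)-\bigl(-\Delta+\lambda^2\bigr)^{-1}\,\Bigr],\qquad R^+(\lambda):=\bigl(-\Delta-\lambda^2-\i0\bigr)^{-1},
\]
and note that $(-\Delta+\lambda^2)^{-1}$ has an exponentially decaying kernel, so it contributes nothing to the far field. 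Writing $W=\A\cdot\nabla+V$, the incident wave $u_0=e^{\i\lambda\theta\cdot x}$, and $u=u_0+v$, the scattered field solves $v=-R_0(\lambda)W(u_0+v)$; I would set $v=\sum_{j\ge1}v_j$ with $v_1=-R_0(\lambda)Wu_0$, $v_{j+1}=-R_0(\lambda)Wv_j$, and let $a=\sum_{j\ge1}a_j$, where $a_j$ is the far-field coefficient of $v_j$.

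The computation of $a_1$ is the heart of the matter. Since $Wu_0=(\i\lambda\,\A\cdot\theta+V)e^{\i\lambda\theta\cdot x}$ and, in $\R^3$, the far-field coefficient of $R^+(\lambda)f$ is $\tfrac{1}{4\pi}\widehat f(\lambda\omega)$ with $\widehat g(\zeta)=\int e^{-\i\zeta\cdot y}g(y)\,\d y$, one gets
\[
a_1(\omega,\theta,\lambda)=-\frac{1}{8\pi\lambda^2}\Bigl[\,\i\lambda\,\theta\cdot\widehat\A(\xi)+\widehat V(\xi)\,\Bigr],\qquad \xi:=\lambda(\omega-\theta).
\]
Using $\theta=\tfrac12(\theta+\omega)-\tfrac{\xi}{2\lambda}$, the identity $\xi\cdot\widehat\A(\xi)=-\i\,\widehat{\nabla\cdot\A}(\xi)$, and $(\theta+\omega)\cdot\xi=|\omega|^2-|\theta|^2=0$, this rearranges to
\[
a_1=-\frac{\i}{16\pi\lambda}(\theta+\omega)\cdot\widehat\A(\xi)-\frac{1}{8\pi\lambda^2}\,\widehat{V-\tfrac12\nabla\cdot\A}(\xi),
\]
which already displays $\curl\A$-data at order $\lambda^{-1}$ and $V-\tfrac12\nabla\cdot\A$ at order $\lambda^{-2}$. (If the paper normalizes $a$ with an extra factor of $\lambda$ or $4\pi$, the two combinations merely shift to neighbouring orders; the structure is unchanged.)

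Next I would bound the remainder. Let $K$ be a compact set containing $\supp\A\cup\supp V$ and $\mu_j:=\|Wv_j\|_{L^2(K)}$. Using the standard limiting absorption bounds $\|R^+(\lambda)\|_{L^2_\delta\to L^2_{-\delta}}=\mathcal{O}(\lambda^{-1})$ and $\|\nabla R^+(\lambda)\|_{L^2_\delta\to L^2_{-\delta}}=\mathcal{O}(1)$ (uniform for $\lambda\ge1$, $\delta>\tfrac12$), together with $\|(-\Delta+\lambda^2)^{-1}\|=\mathcal{O}(\lambda^{-2})$, $\|\nabla(-\Delta+\lambda^2)^{-1}\|=\mathcal{O}(\lambda^{-1})$, and recalling that $W$ costs at most one power of $\lambda$ on a frequency-$\lambda$ oscillation, one obtains $\mu_0=\mathcal{O}(\lambda)$ and $\mu_{j+1}\le C\lambda^{-2}\mu_j$. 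Hence the series converges for $\lambda$ large (so the Lippmann--Schwinger equation is uniquely solvable by Neumann iteration, with no self-adjointness needed despite the first-order term), and since $a_{j+1}=-\tfrac{1}{8\pi\lambda^2}\widehat{Wv_j}(\lambda\omega)$ with $|\widehat{Wv_j}(\lambda\omega)|\le |K|^{1/2}\mu_j$, we get $|a_j|\le C\lambda^{-(2j-1)}$, so $a=a_1+\mathcal{O}(\lambda^{-3})$ uniformly in $\omega,\theta$.

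The recovery then proceeds in two steps. Fix $\xi\ne0$ and let $\lambda\to\infty$ along any family with $\omega-\theta=\xi/\lambda$; then $\theta,\omega\to\nu$ for a unit vector $\nu\perp\xi$, and $\lambda\,a(\omega,\theta,\lambda)\to-\tfrac{\i}{8\pi}\,\nu\cdot\widehat\A(\xi)$. Letting $\nu$ run over the unit circle of the plane $\xi^\perp$ recovers the projection $P_{\xi^\perp}\widehat\A(\xi)$ for every $\xi\ne0$, equivalently $\widehat{\curl\A}(\xi)=\i\,\xi\times\widehat\A(\xi)=\i\,\xi\times P_{\xi^\perp}\widehat\A(\xi)$, hence $\curl\A$ by Fourier inversion. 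For the second quantity, $(\theta+\omega)\cdot\widehat\A(\xi)=(\theta+\omega)\cdot P_{\xi^\perp}\widehat\A(\xi)$ is by now a known quantity, the divergent $\mathcal{O}(\lambda)$ contributions cancel exactly, and $\lambda^2(a-a_1)=\mathcal{O}(\lambda^{-1})$, so
\[
\lambda^2 a(\omega,\theta,\lambda)+\frac{\i\lambda}{16\pi}(\theta+\omega)\cdot P_{\xi^\perp}\widehat\A(\xi)\;\longrightarrow\;-\frac{1}{8\pi}\,\widehat{V-\tfrac12\nabla\cdot\A}(\xi)
\]
as $\lambda\to\infty$ with $\xi$ fixed; this recovers $V-\tfrac12\nabla\cdot\A$ on $\xi\ne0$, hence everywhere by continuity. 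The step I expect to be the main obstacle is the uniform-in-$\lambda$ remainder estimate: one must control $\nabla R^+(\lambda)$ against the $\lambda$-growth generated by $\A\cdot\nabla$ acting on the oscillatory waves, so that the error is genuinely $o(\lambda^{-2})$ and the $\lambda^{-2}$ coefficient can be isolated in the limit; the remaining steps are Fourier analysis together with the elementary linear algebra relating $P_{\xi^\perp}$ and $\xi\times(\cdot)$.
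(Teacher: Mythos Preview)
Your argument is correct and reaches the same Born approximation and recovery scheme as the paper, but by a genuinely different route. The paper does not iterate the Lippmann--Schwinger equation. Instead, it builds a geometric-optics ansatz $u=e^{\i\lambda x\cdot\theta}(1+\lambda^{-2}a_2+\lambda^{-3}a_3+\cdots)$, solves transport equations for the $a_j$ along $\theta$-rays (Proposition~3.1), and then proves the $H^1$ error is $\mathcal{O}(\lambda^{-3})$ via a cutoff localization and a stationary-phase argument applied to the free kernel $G_0$; plugging the ansatz into \r{2.2} yields $a=\i\lambda\,\theta\cdot\hat{\A}(\xi)+\hat V(\xi)+\mathcal{O}(\lambda^{-1})$ (Theorem~5.1). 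For the inversion step the paper uses a symmetrized pair $\omega^\pm=\theta\cos\mu\pm\frac{\xi}{|\xi|}\sin\mu$ and reads $\theta\cdot\hat{\A}(\xi)$ from $a^+-a^-$ and $-\i\xi\cdot\hat{\A}(\xi)+2\hat V(\xi)$ from $a^++a^-$; the $\mathcal{O}(\lambda)$ part cancels in the sum rather than being subtracted as a known quantity, as you do.

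What each approach buys: your Neumann-series argument is shorter and avoids both the transport hierarchy and the stationary-phase step, relying only on the factorization of $R_0(\lambda)$ and the standard limiting-absorption bounds you quote; it gives exactly the $\mathcal{O}(\lambda^{-1})$ remainder (paper's normalization) that the inversion needs. The paper's ansatz, on the other hand, produces the explicit higher coefficients $a_2,a_3,a_4,a_5$ as ray transforms of $\A,V$, which are what drive the near-field result (Proposition~4.1) showing that no extra information appears at the next orders; your iteration does not furnish those formulas without additional work. One cosmetic point: your ``divergent $\mathcal{O}(\lambda)$ contributions cancel exactly'' is really a subtraction of the already-recovered $P_{\xi^\perp}\hat{\A}$ term, as your displayed limit makes clear; the paper's $\omega^\pm$ trick achieves a genuine cancellation and is slightly cleaner for that step.
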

	\vspace{-1mm}
	In other words, for another pair $(\tilde{\A},\tilde{V})$ with scattering amplitude $\tilde{a}$ so that $a=\tilde{a}+\mathcal{O}(\lambda^{-3}),$ then $\curl \A = \curl \tilde{\A}$ and $V-\frac{1}{2}\nabla\cdot\A = \tilde{V}-\frac{1}{2}\nabla\cdot\tilde{\A}.$  We prove the following high-frequency approximation of the scattering amplitude
	\vspace{-2mm}
$$
	a(\omega,\theta,\lambda)=\i\lambda\theta\cdot\hat{\A}(\lambda(\omega-\theta)) + \hat{V}(\lambda(\omega-\theta)) + \mathcal{O}(\lambda^{-1})
	\quad \text{\rm as \quad $\lambda \rightarrow \infty$}. \vspace{-1mm}
$$	 
	We also study the near-field scattering problem and show that knowing the high-frequency asymptotic expansion up to error of order $\lambda^{-3}$ recovers the same two above quantities but contains no additional information about $\A$ and $V$. Our recovery process  is constructive and explicit, and in principal stable, but we do not formally study stability.
	 
	For the well-studied scattering problem of the Schr\"{o}dinger equations, it has been shown that one can fully recover $\curl \A$ and $V$ and there is a gauge invariance, that is, for any two compactly supported magnetic fields with the same $\curl,$ the measurement cannot distinguish between them.   

	This paper is organized as follows: we state some preliminary definitions and results in section two. In section three, the asymptotic solution of the biharmonic equation has been formulated explicitly and appropriate error estimates have been established. Section four is devoted to near-field scattering problem and the proof of our main results is given in section five. The appendix is a collection of some technical formula and statements from \cite{15}.  
	
	\hspace{-4mm}\textbf{Acknowledgments.}
	The author would like to express his gratitude for Prof. {P. Stefanov} for suggesting this topic and many useful discussions throughout this work. The author thanks Prof. {V. Serov} and {T. Tyni} for carefully reading the manuscript and their comments which helped in improving the manuscript significantly.
	\vspace{-4mm}
\section{Preliminarily Results}
	Our goal is to find a special solution of equation \r{1.1} and corresponding amplitude which are called scattering solution and scattering amplitude. We first define the outgoing resolvent and solution which are fundamental notations. 
\begin{definition}
	We define the outgoing free resolvent operator $R_0(\lambda):=(\Delta^2 -\lambda^4)^{-1}$ from $C^{\infty} _0$ to $C^{\infty}$ as the analytic continuation of the operator
	\vspace{-2mm} 
$$
	\mathcal{F}(R_0(\lambda)f){\scriptstyle(\xi)}=\frac{\hat{f}(\xi)}{|\xi|^4 - \lambda^4}, \quad \text{from} \quad \mathcal{I}m\ \lambda > 0 \quad \text{to} \quad \C.\vspace{-1mm}
$$ 
\end{definition}
\begin{definition}
	Let $V \text{and}\ \A \in C^k(\R^3)$ for $k$ large enough. We denote the outgoing resolvent operator by $R(\lambda): C^{\infty} _0 (\R^n) \rightarrow C^{\infty}(\R^n)$ and define as
	\vspace{-1mm}
$$
	R(\lambda) = \lim_{\epsilon \rightarrow 0^{+}} (\Delta^2 + \A\cdot\nabla + V -\lambda^4 -\i\epsilon)^{-1}.\vspace{-1mm}
$$
\end{definition}
\begin{definition}
	Given $\lambda \in \C$, we say that the function $u$ is $\lambda$-outgoing, if there exists $c > 0$ and $f \in \mathcal{E}'$ such that $u_{|_{|x|>c}} = R_0(\lambda)f_{|_{|x|>c}}.$ In applications, the constant $c$ is larger than the radius of the support of the perturbations.
\end{definition}	
	For $f\in C^{\infty} _0(\R^3)$, we have the following integral operator representation
	\vspace{-1mm}
\be{2.1}
	[R_0(\lambda)f]{\scriptstyle(x)}=\int G_0(x,y,\lambda) f(y)\d y, \quad \text{ with \quad $G_0(x,y,\lambda)=\frac{e^{\i\lambda|x-y|}-e^{-\lambda|x-y|}}{8\pi\lambda^2|x-y|}$.}
\ee
	Note that $G_0$ is a 3-dimensional fundamental solution of $\Delta^2 -\lambda^4$, i.e. the kernel of $(\Delta^2 -\lambda^4-\i0)^{-1}$.

	The operator $\Delta^2 -\lambda^4$ can be written as $(-\Delta -\lambda^2)(-\Delta +\lambda^2)$. Since the operator $-\Delta +\lambda^2$ is elliptic, i.e. the principle symbol is $|\xi|^2 + \lambda^2$, there is no geometric optics. The operator $-\Delta -\lambda^2$ is the Helmholtz operator.

	We now formulate the scattering amplitude where the derivation mainly follows \cite{16}. Let $u=e^{\i\lambda x\cdot\theta}+u_{sc} $ be a solution for equation \r{1.1}, where $e^{\i\lambda x\cdot\theta}$ is the harmonic plane wave with incoming direction $\theta \in S^2$ (i.e. incident wave which is neither outgoing nor incoming), and $u_{sc}$ is the scattered solution which is assumed to be outgoing. To formulate the scattering amplitude $a$, we first formulate the the Lippmann-Schwinger integral equation. One has
	\vspace{-1mm}
$$
	(\Delta^2 + \A\cdot\nabla + V -\lambda^4)(e^{\i\lambda x\cdot\theta}+u_{sc})=0 
	\quad \Longrightarrow \quad (\Delta^2 -\lambda^4)(e^{\i\lambda x\cdot\theta}+u_{sc})=-(\A\cdot\nabla + V)u 
$$
	\vspace{-5mm}
$$
	\Longrightarrow \quad (\Delta^2 -\lambda^4)u_{sc}=-(\A\cdot\nabla + V)u \quad \Longrightarrow \quad u_{sc} = -[R_0(\lambda)(\A\cdot\nabla + V)]u.
$$
	Since $u_{sc}$ is an outgoing scattered solution, one may invert the operator $(\Delta^2 - \lambda^4)$ to have an explicit formula for the scattering solution $u_{sc}$ using equation \r{2.1}: 
	\vspace{-1mm}
$$
	u_{sc}= -\int G_0(x,y,\lambda) \big(\A{\scriptstyle(y)}\cdot\nabla + V{\scriptstyle(y)}\big) u{\scriptstyle(y,\theta,\lambda)}\d y. \vspace{-1mm}
$$
	Note that since $\A$ and $V$ are compactly supported, by definition $R_0(\lambda)(\A\cdot\nabla + V)$ is outgoing and therefore $u_{sc}$ is unique. On the other hand,
	\vspace{-1mm}
$$
	(\Delta^2 + \A\cdot\nabla + V -\lambda^4)(e^{\i\lambda x\cdot\theta}+u_{sc})=0 
	\quad \Longrightarrow \quad u_{sc} = -[R(\lambda)(\A\cdot\nabla + V)]e^{\i\lambda x\cdot\theta},
$$
	where $R(\lambda)$ is defined by Definition 2.2. We need to justify that the right hand side of the last equation is also outgoing. Assume that the resolvent exists for some $\lambda$. As it is shown in [Corollary 4.4, \cite{15}], by the resolvent identity, one has
	\vspace{-3mm}
$$
	R(\lambda) - R_0(\lambda) = -R_0(\lambda)(\A\cdot\nabla + V)R(\lambda) \quad \Longrightarrow \quad R(\lambda) = R_0(\lambda) + \sum^{\infty} _{k=1} \big(-R_0(\lambda)(\A\cdot\nabla + V)\big)^k R_0(\lambda).\vspace{-1mm}
$$	
	Using Agmon's estimates one can show the above series converges in $H^1 _{-\delta}$ and hence a unique solution exists. Therefore, for a compactly supported function, $R(\lambda)$ is a well-defined outgoing operator and one has the following important identity
	\vspace{-1mm}
$$
	u_{sc} = -[R_0(\lambda)(\A\cdot\nabla + V)]u = -[R(\lambda)(\A\cdot\nabla + V)]e^{\i\lambda x\cdot\theta}.
$$	  
	We are particularly interested in an outgoing solution of \r{1.1}. Since every outgoing solution has a far-field pattern (see \cite{11}), for any $(\omega,\theta,\lambda)\in S^{2}\times S^{2}\times \R^+$, there exists a function $a=a(\omega,\theta,\lambda)$ such that
	\vspace{-1mm}
$$
	u(x,\theta,\lambda)= e^{\i\lambda x\cdot \theta} - C_3\frac{e^{\i\lambda|x|}}{\lambda^2|x|}a(\omega,\theta,\lambda) + \mathcal{O}(\frac{1}{|x|^{2}}), \ \text{as $|x|\rightarrow \infty$}, \vspace{-1mm}
$$ 
	where $\omega=\frac{x}{|x|}$ is an outgoing direction. The scattering amplitude $a(\omega,\theta,\lambda)$ is given by
	\vspace{-2mm}
\be{2.2}
	a(\omega,\theta,\lambda)=\int e^{-\i\lambda\omega\cdot y}(\A{\scriptstyle(y)}\cdot\nabla + V{\scriptstyle(y)}) u{\scriptstyle(y,\theta,\lambda)}\d y.\vspace{-1mm}
\ee
	The scattering amplitude $a$ measures scattering in direction $\theta,$ for a plane wave at frequency $\lambda$ propagating in direction $\omega$. Next section provides necessary tools to proof the main result. We use the above representation of scattering solution to find appropriate estimates for our results. 
\vspace{-3mm}
\section{high-frequency Asymptotic Expansion of Biharmonic Solutions}
	Consider the following ansatz expansion for the solution of biharmonic equation \r{1.1}:
	\vspace{-1mm}
$$
	u(x,\theta,\lambda)=e^{\i\lambda x\cdot \theta}(a_0+\frac{\i}{\lambda}a_1 + \frac{1}{\lambda^2}a_2 + \frac{1}{\lambda^3}a_3 + \frac{1}{\lambda^4}a_4 + \frac{1}{\lambda^5}a_5 + \mathcal{O}(\frac{1}{\lambda^6}))= e^{\i\lambda x\cdot \theta}\bold{a}.\vspace{-1mm}
$$
	Since the wave before entering the support is just a plane wave $e^{\i\lambda x\cdot\theta}$ propagating in direction $\theta \in S^{2}$, 
	we assume the following initial condition where ${a_0(x,\theta)}_{|_{x\cdot\theta\ll0}}=1$ and ${a_i(x,\theta)}_{|_{x\cdot\theta\ll0}}=0$ for $i=1,2, 3, \dots .$ 
 
	The proposition below gives explicit expression for the coefficient $a_i$ for the measurement up to $\mathcal{O}(\lambda^{-3})$.
\begin{proposition}
	If $u$ is the solution of biharmonic equation \r{1.1}, then for $i=0, 1, \dots, 5$, the coefficient $a_i$ solves the following zero-initial condition system of equations: 
\be{3.1}	
	\left\{
	\begin{array}{ll}
		a_0=1, \quad a_1 = 0, \quad 4(\theta\cdot\nabla)a_2 = \theta\cdot\A,\quad
		4\i(\theta\cdot\nabla) a_3 = -2(\Delta+2(\theta\cdot\nabla)^2)a_2 + V,\\
		4\i(\theta\cdot\nabla) a_4 = -2(\Delta+2(\theta\cdot\nabla)^2)a_3 +\i(4(\theta\cdot\nabla)\Delta + \theta\cdot\A)a_2,\\
		4\i(\theta\cdot\nabla) a_5 = -2(\Delta+2(\theta\cdot\nabla)^2)a_4 +\i(4(\theta\cdot\nabla)\Delta + \theta\cdot\A)a_3 + \i(\Delta^2 + \A\cdot\nabla + V)a_2.
	\end{array}
	\right.
\ee
	Moreover, for
	\vspace{-2mm}
\be{3.2} 
	E(x,\theta,\lambda) := u(x,\theta,\lambda) - e^{\i\lambda x\cdot \theta}(1+ \frac{1}{\lambda^2}a_2 + \frac{1}{\lambda^3}a_3) = u(x,\theta,\lambda) - e^{\i\lambda x\cdot \theta}(1+ \bold{\tilde{a}}), \vspace{-1mm}
\ee
	the following estimates hold;
$$
	\norm {E(x,\theta,\lambda)}_{L^2(\R^3)} = \mathcal{O}(\lambda^{-4}), \quad \quad  \norm {E(x,\theta,\lambda)}_{H^1(\R^3)} = \mathcal{O}(\lambda^{-3}).\vspace{-1mm}
$$
\end{proposition}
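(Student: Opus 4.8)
The plan is to first obtain the transport system \r{3.1} by a conjugation-and-power-counting argument, and then to promote the formal expansion to the two estimates using the resolvent bounds of Section~2.

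For the transport equations I would compute the conjugated operator $e^{-\i\lambda x\cdot\theta}\,(\Delta^2+\A\cdot\nabla+V-\lambda^4)\,\bigl(e^{\i\lambda x\cdot\theta}\,\cdot\,\bigr)$. Since $e^{-\i\lambda x\cdot\theta}\nabla\bigl(e^{\i\lambda x\cdot\theta}f\bigr)=(\nabla+\i\lambda\theta)f$ and $|\theta|=1$, a direct expansion shows this operator equals
$$
-4\i\lambda^3(\theta\cdot\nabla)-2\lambda^2\bigl(\Delta+2(\theta\cdot\nabla)^2\bigr)+\lambda\bigl(4\i(\theta\cdot\nabla)\Delta+\i\,\theta\cdot\A\bigr)+\bigl(\Delta^2+\A\cdot\nabla+V\bigr),
$$
a polynomial in $\lambda$ of degree three with operator-valued coefficients. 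Substituting the ansatz (whose $\lambda^{-j}$-coefficient is $a_0$ for $j=0$, $\i a_1$ for $j=1$, and $a_j$ for $j\ge 2$) and requiring the coefficients of $\lambda^{3},\lambda^{2},\dots,\lambda^{-2}$ to vanish one at a time yields first $a_0=1$ and $a_1=0$, and then precisely the four transport equations of \r{3.1} for $a_2,\dots,a_5$. Each of these is a first-order linear transport equation along the lines $s\mapsto x+s\theta$; with the prescribed zero data at $x\cdot\theta\ll 0$ it is solved uniquely by integrating the right-hand side in the direction $\theta$, and since $\A,V\in C^k$ each $a_i$ is smooth of the expected order near $\supp\A\cup\supp V$. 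This step is purely computational; the only care needed is the $\i/\lambda$ normalization of $a_1$ and keeping track of which compositions of $\Delta$, $\theta\cdot\nabla$, $\A\cdot\nabla$, $V$ contribute to which power of $\lambda$.

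For the estimates, set $v_\lambda:=e^{\i\lambda x\cdot\theta}\bigl(1+\lambda^{-2}a_2+\lambda^{-3}a_3+\lambda^{-4}a_4+\lambda^{-5}a_5\bigr)$, the reason for carrying $a_4,a_5$ being that with $a_2,a_3$ alone the residual would only be $\mathcal{O}(\lambda^{-1})$. By the construction of the $a_i$, the $\lambda^{3},\dots,\lambda^{-2}$ coefficients of $e^{-\i\lambda x\cdot\theta}(P-\lambda^4)v_\lambda$ cancel, so
$$
(P-\lambda^4)v_\lambda=e^{\i\lambda x\cdot\theta}\rho_\lambda,\qquad \rho_\lambda=\lambda^{-3}\bigl[\,\cdots\,\bigr]+\lambda^{-4}\bigl[\,\cdots\,\bigr]+\lambda^{-5}\bigl[\,\cdots\,\bigr],
$$
where each bracket is a fixed ($\lambda$-independent) combination of $\Delta$, $\theta\cdot\nabla$, $\A\cdot\nabla$, $V$ applied to $a_3,a_4,a_5$; in particular $\|\rho_\lambda\|_{L^2}=\mathcal{O}(\lambda^{-3})$. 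Because $v_\lambda-e^{\i\lambda x\cdot\theta}$ is $\lambda$-outgoing and $u$ is the outgoing scattering solution, $u-v_\lambda$ is $\lambda$-outgoing and solves $(P-\lambda^4)(u-v_\lambda)=-e^{\i\lambda x\cdot\theta}\rho_\lambda$, whence $u-v_\lambda=-R(\lambda)\bigl(e^{\i\lambda x\cdot\theta}\rho_\lambda\bigr)$. Inserting the Agmon-type mapping bounds for $R_0(\lambda)$ --- hence for $R(\lambda)$ through the convergent Born series of Section~2 --- which gain powers of $\lambda$ from $L^2_\delta$ into $L^2_{-\delta}$ and lose only one derivative's worth of $\lambda$ into $H^1_{-\delta}$, gives $\|u-v_\lambda\|_{L^2}=\mathcal{O}(\lambda^{-4})$ and $\|u-v_\lambda\|_{H^1}=\mathcal{O}(\lambda^{-3})$. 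Finally $E=(u-v_\lambda)+e^{\i\lambda x\cdot\theta}\bigl(\lambda^{-4}a_4+\lambda^{-5}a_5\bigr)$, and since a derivative landing on $e^{\i\lambda x\cdot\theta}$ costs one power of $\lambda$, the two added-back terms are $\mathcal{O}(\lambda^{-4})$ in $L^2$ and $\mathcal{O}(\lambda^{-3})$ in $H^1$; combining the two contributions proves the proposition.

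I expect the main obstacle to be the resolvent estimate with the sharp power of $\lambda$: one factors $R_0(\lambda)=(-\Delta-\lambda^2)^{-1}(-\Delta+\lambda^2)^{-1}$, observes that the elliptic factor is smoothing and harmless while the Helmholtz factor obeys the limiting-absorption bounds, and then propagates this uniformly in $\lambda$ through the Neumann series defining $R(\lambda)$, carefully counting how many powers of $\lambda$ are gained in $L^2$ versus in $H^1$ and how the first-order perturbation $\A\cdot\nabla$ interacts with these gains. A secondary point worth recording is that the $a_i$ are transported along $\theta$ and hence are not compactly supported, so the $L^2(\R^3)$ and $H^1(\R^3)$ norms in the statement are to be read over a fixed ball containing $\supp\A\cup\supp V$ --- precisely the region that enters the scattering amplitude \r{2.2} --- on which the argument above applies unchanged.
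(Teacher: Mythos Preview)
Your derivation of the transport system \r{3.1} by conjugation and power-matching is exactly what the paper does and is fine. The gap is in the error estimates, and it is not a secondary point but the heart of the matter.

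The functions $a_2,\dots,a_5$ are obtained by integrating sources along $\theta$-rays; they are supported in a half-infinite cylinder and do not decay---in fact $a_3,a_4,a_5$ grow polynomially along rays past $\supp\A\cup\supp V$, since for instance the source $\Delta a_2$ for $a_3$ is constant (not zero) there. Consequently $v_\lambda-e^{\i\lambda x\cdot\theta}$ is \emph{not} $\lambda$-outgoing in the sense of Definition~2.3, and $e^{\i\lambda x\cdot\theta}\rho_\lambda\notin L^2_\delta(\R^3)$, so the identity $u-v_\lambda=-R(\lambda)\bigl(e^{\i\lambda x\cdot\theta}\rho_\lambda\bigr)$ is unjustified and the Agmon bound of Lemma~6.1 cannot be invoked. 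Reading the norms over a ball does not rescue the argument: the resolvent is a global operator and you have no boundary condition on the ball.

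The paper's remedy is to insert a cutoff $\chi\in C^\infty_0$ equal to $1$ near $\supp\A\cup\supp V$ and to work with $E_\chi=u-e^{\i\lambda x\cdot\theta}\bigl(1+\chi(\lambda^{-2}a_2+\lambda^{-3}a_3)\bigr)$. This makes $E_\chi$ genuinely outgoing and the source compactly supported, but creates commutator terms $\lambda^3[\theta\cdot\nabla,\chi](\lambda^{-2}a_2+\cdots)=\mathcal O(\lambda)$ and $\lambda^2[\Delta+2(\theta\cdot\nabla)^2,\chi](\lambda^{-2}a_2+\cdots)=\mathcal O(1)$, too large for Agmon alone to yield $\mathcal O(\lambda^{-4})$. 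These terms are handled by a separate oscillatory-integral argument: one uses the resolvent identity to reduce to $R_0(\lambda)$, writes $R_0$ via the explicit kernel $G_0$ of \r{2.1}, and observes that on $\supp\nabla\chi$ the phase $|x-y|+y\cdot\theta$ has $y$-gradient bounded away from zero, so repeated integration by parts gains arbitrarily many powers of $\lambda^{-1}$. Carrying $a_4,a_5$ as you propose does not circumvent this: the leading commutator is always $\lambda^3[\theta\cdot\nabla,\chi]\lambda^{-2}a_2=\mathcal O(\lambda)$ regardless of how many further terms you include, so this non-stationary-phase step is the essential missing ingredient in your proof.
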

\begin{proof}
	Let $u(x,\theta,\lambda)=e^{\i\lambda x\cdot \theta}\bold{a}$ be solution of biharmonic equation \r{1.1}. Since 
$$
	\left\{\begin{array}{ll}
	\A\cdot\nabla(e^{\i\lambda x\cdot \theta}\bold{a})= e^{\i\lambda x\cdot \theta}[\i \lambda \A\cdot\theta + \A\cdot\nabla]\bold{a}\\
	\Delta^2(e^{\i\lambda x\cdot \theta}\bold{a})=e^{\i\lambda x\cdot \theta}[\i \lambda\theta+\nabla]^4\bold{a}= e^{\i\lambda x\cdot \theta}[ \Delta^2 + 4\i\lambda(\theta\cdot\nabla)\Delta -2\lambda^2\Delta -4\lambda^2(\theta\cdot\nabla)^2 -4\i\lambda^3(\theta\cdot\nabla) +\lambda^4]\bold{a}
	\end{array}
	\right. 
$$
	we have 
	\vspace{-1mm}
$$
	(P - \lambda^4) u=e^{\i\lambda x\cdot \theta}[-4\i\lambda^3(\theta\cdot\nabla) -2\lambda^2 (\Delta+2(\theta\cdot\nabla)^2) + \i\lambda(4(\theta\cdot\nabla)\Delta + \A\cdot\theta) +  \Delta^2 + \A\cdot\nabla + V]\bold{a}= 0.\vspace{-1mm}
$$
	Rearranging all terms with respect to the power of $\lambda$ and equating singular coefficients, we get the following transport equations for $a_i$:
$$
	\left\{
	\begin{array}{ll}
	\mathcal{O}(\lambda^3): \ \ -4\i(\theta\cdot\nabla)a_0=0,\\
	\mathcal{O}(\lambda^2): \ \quad \ 4(\theta\cdot\nabla)a_1 -2(\Delta+2(\theta\cdot\nabla)^2)a_0=0,\\
	\mathcal{O}(\lambda): \ \quad -4\i(\theta\cdot\nabla)a_2 -2\i(\Delta+2(\theta\cdot\nabla)^2) a_1 + \i(4(\theta\cdot\nabla)\Delta + \theta\cdot\A)a_0=0,\\
	\mathcal{O}(1): \quad \ -4\i(\theta\cdot\nabla) a_3 -2(\Delta+2(\theta\cdot\nabla)^2)a_2 - (4(\theta\cdot\nabla)\Delta + \theta\cdot\A)a_1 + (\Delta^2 + \A\cdot\nabla + V)a_0=0,\\
	\mathcal{O}(\lambda^{-1}): \ -4\i(\theta\cdot\nabla) a_4 -2(\Delta+2(\theta\cdot\nabla)^2)a_3 +\i(4(\theta\cdot\nabla)\Delta + \theta\cdot\A)a_2 + \i(\Delta^2 + \A\cdot\nabla + V)a_1=0,\\
	\mathcal{O}(\lambda^{-2}): \ -4\i(\theta\cdot\nabla) a_5 -2(\Delta+2(\theta\cdot\nabla)^2)a_4 +\i(4(\theta\cdot\nabla)\Delta + \theta\cdot\A)a_3 + \i(\Delta^2 + \A\cdot\nabla + V)a_2=0.
	\end{array}
	\right. 
$$
	The first transport equation above and the initial condition ${a_0}_{|_{x\cdot\theta\ll0}}=1$ implies that ${a_0\equiv1}$. To compute $a_1$, by the second equation above we have
	\vspace{-1mm} 		
$$
	4(\theta\cdot\nabla)a_1(x) -2(\Delta+2(\theta\cdot\nabla)^2)a_0(x)=0 \quad    \Longrightarrow \quad  (\theta\cdot\nabla)a_1(x)= 0.\vspace{-1mm}
$$
	Since ${a_1}_{|_{x\cdot\theta\ll0}}=0$, therefore ${a_1\equiv0}$. Considering the transport equation corresponding $\mathcal{O}(\lambda)$, we have
	\vspace{-2mm}
$$
	-4\i(\theta\cdot\nabla)a_2(x) -2\i(\Delta+2(\theta\cdot\nabla)^2)a_1(x) + \i(4(\theta\cdot\nabla)\Delta + \theta\cdot\A(x))a_0(x)=0 \quad   \Longrightarrow  \quad (\theta\cdot\nabla)a_2(x) =\frac{1}{4}\theta\cdot\A(x).
$$
	Integrating the last equation along the flow $x+t\theta$ yields
	\vspace{-2mm}
$$
	\int_{-\infty}^0 (\theta\cdot\nabla)a_2(x+s\theta) \ ds =\frac{1}{4} \int_{-\infty}^0 \theta\cdot\A(x+s\theta)\ ds .
$$	
	Since $\theta\cdot\nabla=\partial_t$ along the null bi-characteristics (i.e. $x\cdot\theta=t$), therefore
	\vspace{-2mm}
$$
	\int_{-\infty}^0 \partial_s a_2(x+s\theta) \ ds =\frac{1}{4} \int_{-\infty}^0 \theta\cdot\A(x+s\theta)\ ds \quad  \Longrightarrow   \quad a_2(x) =\frac{1}{4} \int_{-\infty}^0 \theta\cdot\A(x+s\theta)\ ds,
$$ 
	which is the $X$-ray transform of the magnetic field $\A$ along the lines $x\cdot\theta=t$. Notice that for $i=3, 4, 5$ all the coefficients $a_i$ depend on the potential and magnetic fields $V$ and $\A$ can be computed recursively, by considering the transport equation corresponding to $\mathcal{O}(\lambda^{-i})$ and integrating along the flow as above. 

 	Our next goal is to establish an estimate for the error term $E(x,\theta,\lambda)$ in \r{3.2}. Since $u$ solves the biharmonic equation, we have
 	\vspace{-2mm}
$$
	-(P-\lambda^4)E(x,\theta,\lambda) = (P-\lambda^4)\big(e^{\i\lambda x\cdot \theta}(1 + \bold{\tilde{a}})\big) 
$$
$$
	=e^{\i\lambda x\cdot \theta}[-4\i\lambda^3(\theta\cdot\nabla) -2\lambda^2 (\Delta+2(\theta\cdot\nabla)^2) + \i\lambda(4(\theta\cdot\nabla)\Delta + \A\cdot\theta) +  \Delta^2 + \A\cdot\nabla + V]\big(1+ \bold{\tilde{a}} \big).
$$
	Expanding the right hand side of above equation and using transport equations \r{3.1} implies that
	\vspace{-2mm}
$$
	-(P-\lambda^4)E(x,\theta,\lambda) =\frac{e^{\i\lambda x\cdot \theta}}{\lambda}\big(- 2(\Delta+2(\theta\cdot\nabla)^2)a_3 +\i\lambda^2(4(\theta\cdot\nabla)\Delta + \A\cdot\theta)(\bold{\tilde{a}}) + \lambda (\Delta^2+\A\cdot\nabla+V)(\bold{\tilde{a}})\big).\vspace{-1mm}
$$
	Notice that although the scattering solution $u_{sc}=u-e^{\i\lambda\cdot\theta}$ is outgoing (see Definition 2.2), the above error term $E$ is not outgoing as $E$ has infinite support. To apply the resolvent $R(\lambda)$, we first need to localize the right hand side of above error in $L^2(\R^3)$: Let the compact set $K$ denotes the support of perturbation $\nabla\cdot\A + V$, and $\chi\in C^\infty _0(\R^3)$ be a smooth cut-off function such that $\chi(x)=1$ near the support $K$. We define
	\vspace{-3mm} 
$$
	E_{\chi}(x,\theta,\lambda):= u(x,\theta,\lambda)-e^{\i\lambda x\cdot \theta}(1+\chi\bold{\tilde{a}}),
$$
	where $u=e^{\i\lambda x\cdot \theta}+u_{sc}$ solves the biharmonic equation \r{1.1}. For all $x$ away from $K$, $\chi(x)=0$, and therefore $E=u_{sc}$ is outgoing. On the other hand, for $\lambda \gg 1$, $E$ is outgoing as $\chi(x)=1$ for $x\in K$. We have
	\vspace{-2mm}
$$
	-(P-\lambda^4)E_{\chi}(x,\theta,\lambda)= e^{\i\lambda x\cdot \theta}\bigg( \lambda\big(-4\i(\theta\cdot\nabla)(\chi a_2) +\i\theta\cdot\A)\big) - \big( 4\i(\theta\cdot\nabla)(\chi a_3) + 2(\Delta+2(\theta\cdot\nabla)^2)(\chi a_2) +V\big)\vspace{-1mm}
$$
$$
	+\frac{1}{\lambda}\big(- 2(\Delta+2(\theta\cdot\nabla)^2)(\chi a_3) +\i(4(\theta\cdot\nabla)\Delta + \A\cdot\theta)(\chi a_2)\big)
	+\frac{1}{\lambda^2}\big(\i(4(\theta\cdot\nabla)\Delta + \A\cdot\theta)(\chi a_3) + (\Delta^2+\A\cdot\nabla+V)(\chi a_2)\big)\vspace{-1mm}
$$
$$	
	+\frac{1}{\lambda^3}(\Delta^2+\A\cdot\nabla+V)(\chi a_3)\bigg)
$$
	Using the Lie bracket notation and the fact that $\chi \A=\A$ and $\chi V=V$
$$
	-(P-\lambda^4)E_{\chi}(x,\theta,\lambda)= e^{\i\lambda x\cdot\theta}\bigg(-4\i\lambda^3 [\theta\cdot\nabla,\chi]\bold{\tilde{a}} - 2 \lambda^2\big([\Delta+2(\theta\cdot\nabla)^2,\chi]\bold{\tilde{a}}\big) + \i\lambda [4\theta\cdot\nabla\Delta+\A\cdot\theta,\chi]\bold{\tilde{a}}\vspace{-2mm}
$$
$$
	+ [\Delta^2+\A\cdot\nabla+V,\chi]\bold{\tilde{a}} + \frac{\chi}{\lambda}\bigg(-2(\Delta+2(\theta\cdot\nabla)^2)a_3 + \lambda(\Delta^2+\A\cdot\nabla+V)\bold{\tilde{a}} + \i\lambda^2(4\theta\cdot\nabla\Delta + \A\cdot\theta)\bold{\tilde{a}}\bigg) = e^{\i\lambda x\cdot\theta}(E_1 + \frac{\chi}{\lambda}E_2).\vspace{-1mm}
$$
	Now we may apply the resolvent $R(\lambda)$ to both sides of the above equation as the right hand side is compactly supported. 	
	Our goal is to show that $\norm {E_{\chi}}_{L^2} = \mathcal{O}(\lambda^{-4}),$ for $\lambda>0$ large enough. Note that since $\bold{\tilde{a}} = \mathcal{O}(\lambda^{-2}),$ the problematic terms in establishing the estimate for $E_{\chi}$ in the right hand side of above equation will be 
\be{3.3}
	 \lambda^2[\Delta+2(\theta\cdot\nabla)^2,\chi]\bold{\tilde{a}}=\mathcal{O}(1) \quad 
	 \textbf{\normalfont and} \quad \lambda^3[\theta\cdot\nabla,\chi]\bold{\tilde{a}}=\mathcal{O}(\lambda),
\ee 
	as the rest of above terms are of $\mathcal{O}(\lambda^{-1})$ which combined with the resolvent estimates given by Lemma 6.1 gives the desired estimates for $E_{\chi}$. Consider the first term in \r{3.3}. By the resolvent identity 
	\vspace{-1mm}
$$
	R_0(\lambda)-R(\lambda)=R_0(\lambda)(\A\cdot\nabla+V)R(\lambda) \quad \Longrightarrow \quad R(\lambda)=R_0(\lambda)(I-(\A\cdot\nabla+V)R(\lambda)).\vspace{-1mm}
$$
	Therefore,
	\vspace{-1mm}
$$
	R(\lambda)\big[\lambda^2e^{\i\lambda x\cdot\theta}[\Delta+2(\theta\cdot\nabla)^2,\chi]\bold{\tilde{a}}\big]
	=R_0(\lambda)(I-(\A\cdot\nabla+V)R(\lambda))\big[\lambda^2e^{\i\lambda x\cdot\theta}[\Delta+2(\theta\cdot\nabla)^2,\chi]\bold{\tilde{a}}\big]\vspace{-1mm}
$$
$$
	=R_0(\lambda)\big[\lambda^2e^{\i\lambda x\cdot\theta}[\Delta+2(\theta\cdot\nabla)^2,\chi]\bold{\tilde{a}}\big] -R_0(\lambda)(\A\cdot\nabla+V)R(\lambda)\big[\lambda^2e^{\i\lambda x\cdot\theta}[\Delta+2(\theta\cdot\nabla)^2,\chi]\bold{\tilde{a}}\big]
	= \mathcal{I}_1 + \mathcal{I}_2.
$$
	Similarly, for the second term in \r{3.3}, one has
	\vspace{-1mm}
$$
	R(\lambda)\big[\lambda^3 e^{\i\lambda x\cdot\theta}[\theta\cdot\nabla,\chi]\bold{\tilde{a}}\big]
	=R_0(\lambda)(I-(\A\cdot\nabla+V)R(\lambda))\big[\lambda^3 e^{\i\lambda x\cdot\theta}[\theta\cdot\nabla,\chi]\bold{\tilde{a}}\big] \vspace{-1mm}
$$
$$
	=R_0(\lambda)\big[\lambda^3 e^{\i\lambda x\cdot\theta}[\theta\cdot\nabla,\chi]\bold{\tilde{a}}\big] - R_0(\lambda)(\A\cdot\nabla+V)R(\lambda)\big[\lambda^3 e^{\i\lambda x\cdot\theta}[\theta\cdot\nabla,\chi]\bold{\tilde{a}}\big]
	 = \mathcal{J}_1 + \mathcal{J}_2.
$$
	Now we are ready establish estimates for $\mathcal{I}_1$, $\mathcal{I}_2$, $\mathcal{J}_1$, and $\mathcal{J}_2$. Note that the integrands in above free-resolvent operators are compactly supported and therefore, all integrals above are well-defined (see equation \r{2.1}).
	  
	\underline{\textbf{Estimating $\mathcal{I}_2$, $\mathcal{J}_2$.}} By Lemma 6.1,
	\vspace{-2mm}
$$
	\norm {\mathcal{I}_2}_{L^2 _{-\delta}} =
	\norm {R_0(\lambda)(\A\cdot\nabla+V)R(\lambda)\big[\lambda^2e^{\i\lambda x\cdot\theta}[\Delta+2(\theta\cdot\nabla)^2,\chi]\bold{\tilde{a}}\big]}_{L^2 _{-\delta}}\vspace{-2mm}
$$
$$
	\le \frac{C_0}{\lambda} \norm {(\A\cdot\nabla+V)R(\lambda)\big[e^{\i\lambda x\cdot\theta}[\Delta+2(\theta\cdot\nabla)^2,\chi]\bold{\tilde{a}}\big]}_{L^2 _{\delta}}
	\le \frac{C_0}{\lambda}\bigg(\norm {\A\cdot\nabla R(\lambda)\big[e^{\i\lambda x\cdot\theta}[\Delta+2(\theta\cdot\nabla)^2,\chi]\bold{\tilde{a}}\big]}_{L^2 _{\delta}} \vspace{-1mm}
$$
$$
	+\ \norm {VR(\lambda)\big[e^{\i\lambda x\cdot\theta}[\Delta+2(\theta\cdot\nabla)^2,\chi]\bold{\tilde{a}}\big]}_{L^2 _{\delta}}\bigg)
	\le \frac{C_0}{\lambda}(\frac{C_1}{\lambda^4} + \frac{C_2}{\lambda^5}) = \mathcal{O}(\lambda^{-5}),
$$
	where we used Lemma 6.1 and the fact that $\A$, $V$ are compactly supported. Similarly, 
	\vspace{-1mm}
$$
	\norm {\mathcal{J}_2}_{L^2 _{-\delta}} =\norm {R_0(\lambda)(\A\cdot\nabla+V)R(\lambda)\big[\lambda^3 e^{\i\lambda x\cdot\theta}[\theta\cdot\nabla,\chi]\bold{\tilde{a}}\big]}_{L^2 _{-\delta}}
	\le C_0 \norm {(\A\cdot\nabla+V)R(\lambda)\big[e^{\i\lambda x\cdot\theta}[\theta\cdot\nabla,\chi]\bold{\tilde{a}}\big]}_{L^2 _{\delta}}\vspace{-2mm}
$$
$$
	\le C_0 \bigg(\norm {\A\cdot\nabla R(\lambda)\big[e^{\i\lambda x\cdot\theta}[\theta\cdot\nabla,\chi]\bold{\tilde{a}}\big]}_{L^2 _{\delta}} 
	+\ \norm {VR(\lambda)\big[e^{\i\lambda x\cdot\theta}[\theta\cdot\nabla,\chi]\bold{\tilde{a}}\big]}_{L^2 _{\delta}}\bigg)
	\le C_0(\frac{C_1}{\lambda^4} + \frac{C_2}{\lambda^5}) = \mathcal{O}(\lambda^{-4}),\vspace{-1mm}
$$
	\underline{\textbf{Estimating $\mathcal{I}_1$, $\mathcal{J}_1$}}. We mainly follow the idea in \cite{13} to estimate $\mathcal{I}_1$, $\mathcal{J}_1$. Let $\Gamma$ be the set where the derivatives of $\chi$ is supported, and
	\vspace{-2mm} 
$$
	S=\{y=x+t\theta \ | \ x\in B(0,R),\ t\ge0\}.
$$
	Then for any $x\in B(0,R)$ and $y\in \Gamma \cap S$ the kernel $G_0(x,y,\lambda)$, given by \r{2.1}, is smooth as $x\not=y$. We have
	\vspace{-2mm}
$$
	\mathcal{I}_1=R_0(\lambda)(\lambda^2e^{\i\lambda x\cdot\theta}[\Delta+2(\theta\cdot\nabla)^2,\chi]\bold{\tilde{a}})(x)
	=\int \frac{e^{\i\lambda(|x-y|+y\cdot\theta)}-e^{-\lambda|x-y|}e^{\i\lambda y\cdot\theta}}{8\pi|x-y|}[\Delta_{y}+2(\theta\cdot\nabla_{y})^2,\chi_{_y}]\bold{\tilde{a}}(y)\d y \vspace{-1mm}
$$
$$
	= \int K(x,y,\lambda,\theta)[\Delta_{y}+2(\theta\cdot\nabla_{y})^2,\chi_{_y}]\bold{\tilde{a}}(y) \d y\vspace{-1mm}
$$
	Splitting the above integral, we have oscillating integrals with a real phase function $\phi_1(x,y)=|x-y|+y\cdot\theta$ and a complex phase function $\phi_2(x,y)=y\cdot\theta +\i|x-y|$. The contribution of the second phase $\phi_2$ is exponentially small as there is a lower bound of $|x-y|$. In fact, for $\lambda$ large enough, away from the diagonal $\{x=y\}$, the term $e^{-\lambda|x-y|}$ exponentially approaches to zero. Therefore, we can concentrate on the first phase function $\phi_1,$ by employing the stationary phase method. By a simple calculation, one has
	\vspace{-1mm}
$$
	\nabla_y \phi_1 = \frac{y-x}{|y-x|} + \theta \quad \Longrightarrow \quad \theta\cdot\nabla_y \phi_1= \frac{(y-x)\cdot\theta}{|y-x|} + 1 > 1,\vspace{-1mm}
$$
	for any $x\in B(0,\R)$ and $y\in \Gamma \cap S$. Since 
	\vspace{-2mm}
$$	
	e^{\i\lambda\phi_1}= \frac{\nabla_y\phi_1\cdot\nabla_y}{\i\lambda|\nabla_y\phi_1|^2}e^{\i\lambda\phi_1},\vspace{-1mm}
$$ 
	multiple integration by parts yields $
	\norm {\mathcal{I}_1}_{L^2 _{-\delta}(\R^n)} =\mathcal{O}(\lambda^{-4}).$
	Similarly, several integration by parts on
	\vspace{-2mm} 
$$
	\mathcal{J}_1=R_0(\lambda)(\lambda^3 e^{\i\lambda x\cdot\theta}[\theta\cdot\nabla,\chi]\bold{a})(x)
	=\int \lambda K(x,y,\lambda,\theta)[\theta\cdot\nabla_{y},\chi_{_y}]\bold{\tilde{a}}(y) \d y.\vspace{-1mm}
$$
	yields $\norm {\mathcal{J}_1}_{L^2 _{-\delta}(\R^n)} =\mathcal{O}(\lambda^{-4}).$ It remains to establish $\norm {E}_{H^1 _{-\delta}(\R^n)} =\mathcal{O}(\lambda^{-3}).$ Consider the operator $\nabla R(\lambda): L^2 _\delta(\R^n) \rightarrow L^2 _\delta(\R^n)$, and
	\vspace{-2mm}
\be{3.4}
	\nabla R(\lambda)[e^{\i\lambda x\cdot\theta}(E_1 + \frac{\chi}{\lambda}E_2)]. \vspace{-1mm}
\ee	
	Note that since $R(\lambda)$ is the resolvent with constant coefficient, the gradient and the resolvent commute. Therefore, the problematic terms in \r{3.4} will be the same as ones discussed in \r{3.3} and all terms of the form $R_0(\lambda)f$ and $R(\lambda)f$ in \r{3.4} will have the desired $H^1 _\delta$ estimates by Agmon's estimate and the Lemma 6.1. Now we need to revisit the argument for those terms in \r{3.4} with stationary phase similar to $\mathcal{I}_1$ and $\mathcal{J}_1$, where an integration by part argument has been used to establish estimates. We recall that we do not use Agmon's estimates to find $H^1 _\delta$ estimates. Applying the gradient directly to $K(x,y,\lambda,\theta)$, one has 
	\vspace{-1mm}
$$
	K(x,y,\lambda,\theta) = \lambda \tilde{K}(x,y,\lambda,\theta)\vspace{-1mm}
$$	
	 where $\tilde{K}$ is smooth bounded away from diagonal $\{x=y\}$. Now integration by parts argument establishes the desired $H^1 _\delta$ estimates of $\mathcal{O}(\lambda^{-N})$, where $N$ depends on the regularity of magnetic field $\A$ and potential function $V$. This proves the proposition.  
\end{proof}

\section{Near-field Scattering of Biharmonic Solutions}
	In this section we study the near-field scattering problem. Let $u$ be the biharmonic solution of equation \r{1.1} and $B_{R}$ be a ball with radius $R>0$ large enough containing the perturbation $\A$ and $V$ ($\supp \A \cup \supp V \subset B_R$). To study the near-field scattering we only consider $u_{|_{x\cdot\theta=R}}$ as our scattering data and do not study the scattering amplitude to reconstruct the high-frequency asymptotic expansion of the solution. In the following proposition, we demonstrate that all terms up to order $\mathcal{O}(\lambda^{-3})$ contains no additional information.
\begin{proposition}
	Assume that the scattering data $u_{|_{x\cdot\theta=R}}$ is known up to error of order $\mathcal{O}(\lambda^{-3})$. Then\\ 
	i) The scattering data recovers $\curl \A$ and $V-\frac{1}{2}\nabla\cdot\A$.\\
	ii) The scattering data known up to error of order $\mathcal{O}(\lambda^{-3})$ contains no additional information on $\A$ and $V$. 
\end{proposition}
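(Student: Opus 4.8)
The plan is to read off the first coefficients of the high-frequency expansion of Proposition~3.1 along the plane $\{x\cdot\theta=R\}$ and then to invert two ray transforms, after first removing a gauge ambiguity that couples $\A$ and $V$ in the $\lambda^{-3}$ coefficient. First I would observe that on $\{x\cdot\theta=R\}$ the factor $e^{\i\lambda x\cdot\theta}=e^{\i\lambda R}$ is a known unimodular constant, and that by \r{3.2} and Proposition~3.1 one has $u-e^{\i\lambda x\cdot\theta}(1+\lambda^{-2}a_2+\lambda^{-3}a_3)=E$ with $\|E\|_{H^1(\R^3)}=\mathcal O(\lambda^{-3})$, so by the trace theorem $\|E|_{x\cdot\theta=R}\|_{H^{1/2}}=\mathcal O(\lambda^{-3})$ (with a faster rate if one continues the expansion under the $C^k$ hypothesis); hence the hypothesis amounts to knowing the traces $a_2|_{x\cdot\theta=R}$ and $a_3|_{x\cdot\theta=R}$ for every $\theta\in S^2$. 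Since $R$ exceeds the radius of $\supp\A\cup\supp V$, for each $x$ with $x\cdot\theta=R$ the half-line $\{x+s\theta:s\le 0\}$ already meets all of $\supp\A$ on the line through $x$ of direction $\theta$; hence $a_2(x)=\tfrac14\int_{-\infty}^0\theta\cdot\A(x+s\theta)\,\d s$ from \r{3.1} shows that, as $\theta$ ranges over $S^2$, $a_2|_{x\cdot\theta=R}$ is $\tfrac14$ times the full longitudinal (tangential) ray transform of the vector field $\A$. The kernel of that transform on $\R^3$ is exactly $\{\nabla\varphi:\varphi\in C_0^\infty\}$, so it determines $\A$ modulo gradients, i.e.\ it determines $\curl\A$, and one may invert it by a known explicit formula; this gives the $\curl\A$ part of i).

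Next, for $V-\tfrac12\nabla\cdot\A$, the obstruction is that $a_3$ also contains $a_2$, which is not gauge invariant, so the $a_3$-equation in \r{3.1} cannot be ``solved for $V$'' directly. My device is to fix a compactly supported reference potential $\A_*$, say with $\supp\A_*\cup\supp\A\subset B_r$ for some $r<R$, such that $\curl\A_*=\curl\A$ (such $\A_*$ exists because $\curl\A$ is a compactly supported divergence-free field, hence the curl of a compactly supported field), and to write $\A=\A_*+\nabla\phi$ with $\phi\in C_0^\infty$ supported in $B_r$. The crucial point is that $\big(1,a_2,a_3\big)|_{x\cdot\theta=R}$ is invariant under $(\A,V)\mapsto(\A+\nabla\phi,\,V+\tfrac12\Delta\phi)$ for $\phi\in C_0^\infty$: a short computation with \r{3.1} gives $\delta a_2=\tfrac14\phi$ and $\delta a_3=-\tfrac1{4\i}(\theta\cdot\nabla)\phi$ (the term $\tfrac12\Delta\phi$ from $\delta V$ cancels $-\tfrac12(\Delta+2(\theta\cdot\nabla)^2)\phi$ coming from $\delta a_2$ except for $-(\theta\cdot\nabla)^2\phi$, whose primitive along $x+s\theta$ is $-(\theta\cdot\nabla)\phi$), and both vanish on $\{x\cdot\theta=R\}$ because $\phi$ does. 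Hence the near-field data of $(\A,V)$ coincides with that of $(\A_*,W_*)$, where $W_*:=(V-\tfrac12\nabla\cdot\A)+\tfrac12\nabla\cdot\A_*$; in the $a_3$-equation for this pair the term $-2(\Delta+2(\theta\cdot\nabla)^2)a_2^{*}$ is now \emph{known} (as $\A_*$ is), so that equation expresses the full scalar $X$-ray transform $\int_{-\infty}^0 W_*(x+s\theta)\,\d s=\int_{\R}W_*(x+s\theta)\,\d s$ in terms of $a_3|_{x\cdot\theta=R}$ and a computable quantity. Inverting the classical scalar $X$-ray transform recovers $W_*$, hence $V-\tfrac12\nabla\cdot\A=W_*-\tfrac12\nabla\cdot\A_*$, finishing i).

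Finally, for ii): if $(\tilde\A,\tilde V)$ satisfies $\curl\tilde\A=\curl\A$ and $\tilde V-\tfrac12\nabla\cdot\tilde\A=V-\tfrac12\nabla\cdot\A$, then $\A-\tilde\A$ is a compactly supported curl-free field, hence $=\nabla\phi$ for some $\phi\in C_0^\infty$, and the second identity forces $\tilde V-V=\tfrac12\Delta\phi$; by the invariance just established $\big(1,a_2,a_3\big)|_{x\cdot\theta=R}$ is the same for the two pairs, so by \r{3.2} and Proposition~3.1 the data $u|_{x\cdot\theta=R}$ and $\tilde u|_{x\cdot\theta=R}$ differ only by $E|_{x\cdot\theta=R}-\tilde E|_{x\cdot\theta=R}=\mathcal O(\lambda^{-3})$, so nothing beyond $(\curl\A,\,V-\tfrac12\nabla\cdot\A)$ can be detected at this accuracy. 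I expect the main obstacle to be precisely this gauge bookkeeping in the $\lambda^{-3}$ coefficient: the ``natural'' solenoidal gauge is inadmissible here because its potential is not compactly supported and hence does not vanish on $\{x\cdot\theta=R\}$ --- which is exactly why one must pass to a compactly supported reference $\A_*$ --- and the factor $\tfrac12$ has to be propagated exactly through the transport equations \r{3.1}; a secondary technical point is making the extraction of $a_3|_{x\cdot\theta=R}$ from data known only to order $\mathcal O(\lambda^{-3})$ rigorous, which uses the stronger form of the error estimate in Proposition~3.1 available under the $C^k$ assumption.
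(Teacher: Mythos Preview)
Your proof is correct and rests on the same gauge computation as the paper's (namely $\delta a_2=\tfrac14\phi$ and $\delta a_3=\tfrac{\i}{4}\,\theta\cdot\nabla\phi$ when $\delta\A=\nabla\phi$, $\delta V=\tfrac12\Delta\phi$), but the organization of part i) differs. The paper argues by \emph{uniqueness}: it takes two pairs $(\A,V)$, $(\tilde\A,\tilde V)$ with matching data, deduces $\delta\A=d\phi$ from $\delta a_2|_{x\cdot\theta=R}=0$, then inserts the \emph{pointwise} identity $\delta a_2(x)=\tfrac14\phi(x)$ into the differenced $a_3$-transport equation to obtain $\int_{-\infty}^0(2\delta V-\Delta\phi)(x+s\theta)\,\d s=0$ on $\{x\cdot\theta=R\}$, hence $2\delta V=\Delta\phi=\nabla\cdot\delta\A$ after X-ray inversion --- no reference field is ever introduced. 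Your reconstructive route via a compactly supported $\A_*$ with $\curl\A_*=\curl\A$ is legitimate (existence of such $\A_*$ is the compactly supported Poincar\'e lemma, $H^2_c(\R^3)=0$), and it has the merit of giving an explicit reconstruction formula rather than only a uniqueness statement; but it is a detour compared with the paper's direct differencing, which avoids choosing any representative and is shorter. For part ii) the two arguments are essentially identical; the paper additionally computes $\delta a_4$ explicitly to illustrate that the nonlinear coupling obstructs extracting further information, which you do not do but is not needed for the stated claim.
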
	 
\begin{proof}
	i) Let $u$ and $\tilde{u}$ be a pair of biharmonic solutions corresponding to pairs $(\A,V)$ and $(\tilde{\A},\tilde{V})$ such that 
	\vspace{-1mm}
$$	
	{(u-\tilde{u})}_{|_{x\cdot\theta=R}} = \mathcal{O}(\lambda^{-3}). \vspace{-1mm}
$$
	By Proposition 3.1, we have  
	\vspace{-2mm}
$$
	a_2(x) =\frac{1}{4} \int_{-\infty}^0 \theta\cdot\A(x+s\theta)\ ds,
$$ 
	which is the X-ray transform of the magnetic field $\A$ along the lines $x+t\theta$, see \cite{10}. The function $\delta u=u-\tilde{u}$ has near-field data up to error of order $\mathcal{O}(\lambda^{-3})$, hence $\delta a_2\equiv0$ implies that there exists a compactly supported function $\phi$ such that $\delta\A= d\phi$. This shows that the scattering data recovers $\curl \A$. To show the scattering data recovers $V-\frac{1}{2}\nabla\cdot\A$, we recall that 
	\vspace{-1mm}
$$
	4\i(\theta\cdot\nabla) a_3(x)= -2\Delta a_2(x) - 4(\theta\cdot\nabla)^2a_2(x) + V(x).\vspace{-1mm}
$$
	To have an explicit formula for $a_3$ we need to invert the Radon transform as follows: by integrating above equation along the flow and using the fact that $\theta\cdot\nabla=\partial_t$ and $(\theta\cdot\nabla)a_2(x) =\frac{1}{4}\theta\cdot\A(x)$ we have
	\vspace{-1mm}
$$
	a_3(x)= \frac{1}{4\i}[-2\int_{-\infty}^0 \Delta a_2{\scriptstyle(x+s\theta)}\ ds - 4 \int_{-\infty}^0(\theta\cdot\nabla)^2a_2{\scriptstyle(x+s\theta)}\ ds + \int_{-\infty}^0V{\scriptstyle(x+s\theta)}\ ds \vspace{-2mm}	
$$	
$$
	\Longrightarrow \quad \quad a_3{\scriptstyle(x)}= \frac{1}{4\i}\int_{-\infty}^0 V{\scriptstyle(x+s\theta)} -2 \Delta a_2{\scriptstyle(x+s\theta)}\ ds + \frac{\i}{4}\theta\cdot\A{\scriptstyle(x)}.\vspace{-1mm}
$$
	Again since $\delta u=u-\tilde{u}$ has near-field data up to error of order $\mathcal{O}(\lambda^{-3})$ and $\delta\A $ vanishes outside of the support, $\delta a_3\equiv0$ implies that
	\vspace{-2mm}
$$
	\int_{-\infty}^0 2\delta V{\scriptstyle(x+s\theta)} -\Delta \phi{\scriptstyle(x+s\theta)}\ ds =0,\vspace{-1mm}
$$ 	
	as $\delta a_2(x) =\frac{1}{4} \phi(x)$. For $x\cdot\theta=R,$ the standard arguments for inverting the X-ray transform implies that  
	\vspace{-1mm}
$$
	2\delta V = 2(V - \tilde{V}) = \Delta \phi= \nabla \cdot d\phi = \nabla \cdot \delta\A.\vspace{-1mm}
$$	
	This shows that the scattering data up to error of order $\mathcal{O}(\lambda^{-3})$ recovers $V-\frac{1}{2}\nabla\cdot\A$.\\
	\vspace{-2mm} 
	ii) By part i) we know that $\delta \A=d\phi$ and $\delta V = \frac{1}{2}\Delta \phi$. Therefore, point-wise we have
$$
	\left\{\begin{array}{ll}
	\delta a_2(x) =\frac{1}{4} \int_{-\infty}^0 \theta\cdot\nabla\phi{\scriptstyle(x+s\theta)}\ ds = \frac{1}{4} \int_{-\infty}^0 \partial_s\phi{\scriptstyle(x+s\theta)}\ ds  \quad \Longrightarrow \quad {\delta a_2(x) =\frac{1}{4} \phi(x)}\\\delta a_3(x) = \frac{\i}{4}\theta\cdot\nabla\phi(x),
	\end{array}
	\right. \vspace{-1mm}
$$
	which shows that the scattering data contains no additional information on $\A$ and $V$ up to error of $\mathcal{O}(\lambda^{-4})$. In what follows, we calculate $\delta a_4$ and $\delta a_5$, and show that due to non-linearity, the Radon and inverse Fourier transform techniques do not provide any insight in how to show that $\phi=0$.
	\vspace{-1mm}
$$
	\delta a_4(x) =\frac{1}{4\i}[\int_{-\infty}^0-2\Delta \delta a_3{\scriptstyle(x+s\theta)}-4(\theta\cdot\nabla)^2\delta a_3{\scriptstyle(x+s\theta)} + 4\i(\theta\cdot\nabla)\Delta \delta a_2{\scriptstyle(x+s\theta)} + \i\theta\cdot\delta(\A a_2){\scriptstyle(x+s\theta)}\ ds].\vspace{-1mm}
$$
	The nonlinear term $\delta a_4$ can be rewritten as
	\vspace{-2mm} 
$$
	\theta\cdot\delta(\A a_2) = (\theta\cdot\A)\delta a_2 + (\theta\cdot\delta\A) \tilde{a}_2 = \frac{1}{4} (\theta\cdot\A)\phi + (\theta\cdot\nabla\phi)a_2 -\frac{1}{4}(\theta\cdot\nabla\phi)\phi \vspace{-2mm}
$$
$$
	= \frac{1}{4} (\theta\cdot\A)\phi + (\theta\cdot\nabla)(\phi a_2) - (\theta\cdot\nabla a_2)\phi -\frac{1}{4}(\theta\cdot\nabla\phi)\phi =  (\theta\cdot\nabla)(\phi a_2) -\frac{1}{4}(\theta\cdot\nabla\phi)\phi,
$$
	where we used the equations for $\delta a_2, \delta a_3$, $\delta \A$, and the fact that $\frac{1}{4} (\theta\cdot\A) = (\theta\cdot\nabla a_2)$. By a simple calculation
	\vspace{-2mm} 
$$	
	\delta a_4(x) = \frac{1}{8} [\Delta\phi(x) - 2(\theta\cdot\nabla)^2\phi(x) - \frac{1}{4}\phi^2(x) + 2\phi(x)a_2(x)].	
$$
\end{proof}
\vspace{-7mm}		
\section{Proof of the Main Results}
	In this section we prove the main result. We first derive the asymptotic expansion of scattering amplitude which is known as Born approximation, see also \cite{16} .
\begin{theorem}
	Let $V , \A \in C^k (\R^3)$ for $k$ large enough. Then
	\vspace{-1mm}
$$
	a(\omega,\theta,\lambda)=\i\lambda\theta\cdot\hat{A}(\lambda(\omega-\theta)) + \hat{V}(\lambda(\omega-\theta)) + \mathcal{O}(\lambda^{-1})
	\quad \text{\rm as \quad $\lambda \rightarrow \infty$},\vspace{-1mm}
$$
	with the remainder uniform in $\theta$, $\omega$.	
\end{theorem}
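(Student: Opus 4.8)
The plan is to feed the WKB expansion of Proposition 3.1 into the integral representation \r{2.2} of the scattering amplitude and then sort contributions by powers of $\lambda$. Concretely, I would write
$$
u(y,\theta,\lambda) = e^{\i\lambda y\cdot\theta}\Big(1 + \frac{1}{\lambda^2}a_2(y,\theta) + \frac{1}{\lambda^3}a_3(y,\theta)\Big) + E(y,\theta,\lambda),
$$
with $E$ obeying the $H^1$ bound $\|E\|_{H^1}=\mathcal O(\lambda^{-3})$ from Proposition 3.1, and substitute into \r{2.2}. This splits $a(\omega,\theta,\lambda)$ into a main term built from the plane wave $e^{\i\lambda y\cdot\theta}$, two lower-order terms built from $a_2$ and $a_3$, and a remainder built from $E$, and the whole proof becomes order counting on these four pieces.

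For the main term I would use $\nabla_y e^{\i\lambda y\cdot\theta} = \i\lambda\,\theta\, e^{\i\lambda y\cdot\theta}$, so that $\A(y)\cdot\nabla e^{\i\lambda y\cdot\theta} = \i\lambda(\theta\cdot\A(y))e^{\i\lambda y\cdot\theta}$, and get
$$
\int e^{-\i\lambda\omega\cdot y}\big(\A(y)\cdot\nabla + V(y)\big)\,e^{\i\lambda y\cdot\theta}\,\d y = \int e^{\i\lambda y\cdot(\theta-\omega)}\big(\i\lambda\,\theta\cdot\A(y) + V(y)\big)\,\d y = \i\lambda\,\theta\cdot\hat\A(\lambda(\omega-\theta)) + \hat V(\lambda(\omega-\theta)),
$$
using the Fourier normalization of Definition 2.1. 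This is precisely the asserted Born term.

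It then remains to bound the rest by $\mathcal O(\lambda^{-1})$. For the $\lambda^{-2}a_2$ piece, the only term not visibly $\mathcal O(\lambda^{-2})$ is the one where $\A\cdot\nabla$ falls on the exponential, contributing $\frac{\i}{\lambda}\int e^{\i\lambda y\cdot(\theta-\omega)}(\theta\cdot\A(y))\,a_2(y)\,\d y$; since $\A$ is compactly supported and $a_2$ is continuous, $(\theta\cdot\A)a_2$ is a bounded compactly supported ($L^1$) function, so this integral is a value of its Fourier transform and is bounded — giving $\mathcal O(\lambda^{-1})$ uniformly in $\omega,\theta$. The terms where $\nabla$ hits $a_2$, the term $Va_2$, and the entire $\lambda^{-3}a_3$ piece are $\mathcal O(\lambda^{-2})$ by the same argument (again the factor $\A$ or $V$ makes the integrand compactly supported, so the non-decay of $a_2,a_3$ away from the support is irrelevant). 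For the $E$ remainder, since $\A,V$ are supported in a fixed compact set $K$, Cauchy--Schwarz and Proposition 3.1 give
$$
\Big|\int e^{-\i\lambda\omega\cdot y}(\A\cdot\nabla + V)E\,\d y\Big| \le |K|^{1/2}\,\big\|(\A\cdot\nabla + V)E\big\|_{L^2(K)} \lesssim \|E\|_{H^1(K)} = \mathcal O(\lambda^{-3}),
$$
which is exactly where the $H^1$ bound of Proposition 3.1 (rather than just the $L^2$ bound) is used. Summing the four pieces yields the claimed asymptotic.

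Uniformity in $\omega$ is automatic, since $\omega$ enters only through the unimodular factor $e^{-\i\lambda\omega\cdot y}$, and uniformity in $\theta$ reduces to the fact that $a_2$, $a_3$, and the implicit constants in Proposition 3.1 depend on $\theta$ continuously over the compact sphere $S^2$. I expect the main obstacle to be mild — the hard analysis is already done in Proposition 3.1 — and to consist entirely of the $E$ term: one must confirm that the $H^1$ estimate of Proposition 3.1 is uniform in $\theta$ and that replacing the weighted norm $\|\cdot\|_{H^1_{-\delta}}$ by the $H^1$ norm over the compact $\supp\A\cup\supp V$ costs nothing. Everything else is substitution and collecting powers of $\lambda$.
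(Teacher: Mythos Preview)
Your proposal is correct and follows essentially the same approach as the paper: substitute the expansion of Proposition~3.1 into \r{2.2}, extract the Born term from the plane-wave piece, and bound the remainder using the $L^2$ and $H^1$ control on $\mathbf a-1$ (equivalently on $a_2/\lambda^2$, $a_3/\lambda^3$, and $E$) over the compact support of $\A,V$. The paper carries out exactly this computation, only lumping your three remainder pieces into a single term $R(\omega,\theta,\lambda)$ and invoking $\|\mathbf a-1\|_{L^2},\|\nabla(\mathbf a-1)\|_{L^2}\le C\lambda^{-2}$ in one line; your more granular decomposition and your remarks on uniformity and on passing from $H^1_{-\delta}$ to $H^1(K)$ make explicit what the paper leaves implicit.
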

\begin{proof}
	Let $u=e^{\i\lambda x\cdot \theta}+ e^{\i\lambda x\cdot \theta}(\bold{a}-1)$ be the biharmonic solution given by Proposition 3.1. Plugging $u$ into \r{2.2}, we have
	\vspace{-2mm}
$$
	a(\omega,\theta,\lambda)= \int e^{-\i\lambda(\omega-\theta)\cdot y}\big(\i\lambda\theta\cdot\A{\scriptstyle(y)} + V{\scriptstyle(y)}\big) \d y + \int e^{-\i\lambda(\omega-\theta)\cdot y}\big(\i\lambda\theta\cdot\A{\scriptstyle(y)} + \A{\scriptstyle(y)}\cdot\nabla+ V{\scriptstyle(y)}\big)(\bold{a}-1){\scriptstyle(y)} \d y\vspace{-1mm}
$$
\be{5.1}
	=\i\lambda\theta\cdot\hat{A}(\lambda(\omega-\theta)) + \hat{V}(\lambda(\omega-\theta)) + R(\omega,\theta,\lambda).
\ee	
	Now by Proposition 3.1 we have $\norm{\bold{a}-1}_{L^2(\R^2)},\ \norm{\nabla(\bold{a}-1)}_{L^2(\R^2)}\le C\lambda^{-2}$
	for some constant $C$. Therefore $R=\mathcal{O}(\lambda^{-1})$ which completes the proof.
\end{proof}
\vspace{-3mm}
\begin{remark}
	\rm 
	Given $\A, V \in C^{\infty}_0$, for $\lambda$ large enough, the first two terms on the right hand side of above amplitude $a$ decay faster that the remainder, if $\omega \not= \theta$ are fixed. In other words, $\frac{a(\omega,\theta,\lambda)}{\lambda}$ is bounded for $\A$ and $V$ regular enough. Therefore, $\supp_{\substack{\omega,\theta,\lambda\\\scriptstyle 0<\lambda_0\le\lambda}} \big|\frac{1}{\lambda}a{\scriptstyle  (\omega,\theta,\lambda)}\big|$ is a well-defined norm for a fixed $\lambda_0>0.$ 
\end{remark}

\begin{theorem}
	Let $\A, V \in C^k(\R^3)$ for $k$ large enough and $\theta\in S^2$ be fixed. Then for any $\xi\not=0$ with $\xi\cdot\theta=0$, the scattering amplitude $a(\omega,\theta,\lambda)$
	uniquely determines $\theta\cdot\hat{\A}{\scriptstyle(\xi)}$ and $-\i\xi\cdot\hat{\A}{\scriptstyle(\xi)} + 2\hat{V}{\scriptstyle(\xi)}$.
\end{theorem}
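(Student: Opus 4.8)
The plan is to recover both quantities from the high‑frequency Born expansion proved above by evaluating it along a family of outgoing directions $\omega=\omega_\lambda$ that collapses onto $\theta$ as $\lambda\to\infty$. First, fix $\theta\in S^2$ and $\xi\neq 0$ with $\xi\cdot\theta=0$, and set $\omega_\lambda:=(\theta+\lambda^{-1}\xi)/|\theta+\lambda^{-1}\xi|\in S^2$. Since $\xi\perp\theta$ one has $|\theta+\lambda^{-1}\xi|^2=1+\lambda^{-2}|\xi|^2$, hence $\omega_\lambda\to\theta$ and
$$
\lambda(\omega_\lambda-\theta)=\xi-\frac{|\xi|^2}{2\lambda}\,\theta+\mathcal{O}(\lambda^{-2}),\qquad\lambda\to\infty .
$$
The hypothesis $\xi\cdot\theta=0$ is exactly what permits such a family: for every $\omega,\theta\in S^2$ one has $\lambda(\omega-\theta)\cdot\theta=-\tfrac{\lambda}{2}|\omega-\theta|^2\le 0$, so the momentum transfer $\lambda(\omega-\theta)$ can only converge to a vector orthogonal to $\theta$. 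Because $\A,V\in C^k_0(\R^3)$, the transforms $\hat{\A},\hat{V}$ are smooth, so inserting $\omega=\omega_\lambda$ into the Born expansion and dividing by $\lambda$ gives $\lambda^{-1}a(\omega_\lambda,\theta,\lambda)\to\i\,\theta\cdot\hat{\A}(\xi)$. This recovers $\theta\cdot\hat{\A}(\xi)$; letting $\xi$ range over $\theta^\perp\setminus\{0\}$ it reconstructs $\theta\cdot\hat{\A}$ on the whole plane $\theta^\perp$ (note this is gauge invariant there, since $\xi\cdot\theta=0$).

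For the second quantity I would, having pinned down the $\mathcal{O}(\lambda)$ term, subtract the now‑known expression $\i\lambda\,\theta\cdot\hat{\A}(\lambda(\omega_\lambda-\theta))$ from $a(\omega_\lambda,\theta,\lambda)$ and extract the remaining $\mathcal{O}(1)$ coefficient. A second‑order Taylor expansion, using the displayed expansion of $\lambda(\omega_\lambda-\theta)$, shows this coefficient equals $\hat{V}(\xi)$ plus a correction produced by the $\lambda^{-1}$‑deviation of the momentum transfer from $\xi$. To rewrite it as the claimed combination — which equals $2\,\widehat{V-\tfrac12\nabla\cdot\A}(\xi)$, the gauge‑invariant potential — I would return to the finer description of the scattering solution in Proposition 3.1: substituting $u=e^{\i\lambda x\cdot\theta}(1+\lambda^{-2}a_2+\lambda^{-3}a_3)+E$ from \r{3.2} into \r{2.2} and invoking the transport equations \r{3.1}, in particular $(\theta\cdot\nabla)a_2=\tfrac14\,\theta\cdot\A$ and the equation for $a_3$, identifies this $\mathcal{O}(1)$ coefficient (up to the factor $4\i$ occurring in \r{3.1}) with the Fourier transform of $a_3$ in the variables transverse to $\theta$ — exactly the object treated in the near‑field analysis of Proposition 4.1. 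Carrying out the $X$‑ray/projection‑slice bookkeeping of that proof then recasts it, on $\theta^\perp$, as $\tfrac12\big(-\i\,\xi\cdot\hat{\A}(\xi)+2\hat{V}(\xi)\big)$, modulo the contribution of $\theta\cdot\hat{\A}|_{\theta^\perp}$, which the leading order has already provided.

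The step I expect to be the main obstacle is this last identification. Since the leading term of $a$ has size $\lambda$ while $\lambda(\omega_\lambda-\theta)$ departs from $\xi$ at order $\lambda^{-1}$ along the forced direction $\theta$, the term $\i\lambda\,\theta\cdot\hat{\A}(\lambda(\omega_\lambda-\theta))$ leaks into the $\mathcal{O}(1)$ level a contribution proportional to the normal derivative $(\theta\cdot\nabla_\zeta)\big[\theta\cdot\hat{\A}(\zeta)\big]\big|_{\zeta=\xi}$, which is not accessible from the leading‑order data by itself. One must show that this leakage recombines with $\hat{V}(\xi)$ into precisely $-\i\,\xi\cdot\hat{\A}(\xi)+2\hat{V}(\xi)$ — and not into $\hat{V}(\xi)$ together with some extra, non‑gauge‑invariant term — which forces an essential use of the transport structure of Proposition 3.1 and of the orthogonality $\xi\cdot\theta=0$. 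The remaining ingredients (existence of $\omega_\lambda$, the Taylor expansions, and the uniformity in $\omega$ of the $\mathcal{O}(\lambda^{-1})$ remainder in the Born expansion) are routine.
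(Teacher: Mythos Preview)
Your recovery of $\theta\cdot\hat{\A}(\xi)$ via $\omega_\lambda\to\theta$ is fine and is essentially the paper's idea in the limit. The second half, however, has a real gap. With the incoming direction frozen at $\theta$, the $\mathcal{O}(1)$ coefficient of $a(\omega_\lambda,\theta,\lambda)$ computes, from the Born expansion and your formula for $\lambda(\omega_\lambda-\theta)$, to
\[
\hat V(\xi)\;-\;\frac{\i\,|\xi|^2}{2}\,(\theta\cdot\nabla_\zeta)\bigl[\theta\cdot\hat{\A}(\zeta)\bigr]\Big|_{\zeta=\xi}.
\]
This normal derivative of $\theta\cdot\hat{\A}$ at $\xi\in\theta^\perp$ is neither determined by the leading-order data (which gives only the \emph{restriction} of $\theta\cdot\hat{\A}$ to $\theta^\perp$) nor equal to $\xi\cdot\hat{\A}(\xi)$ in general: take $\A=(A_1,0,0)$ and $\theta=e_1$, so that $\xi\cdot\hat{\A}(\xi)=0$ for every $\xi\perp\theta$, while $\partial_{\zeta_1}\hat A_1$ on $\theta^\perp$ is generically nonzero. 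Thus the leakage does \emph{not} recombine into $-\i\xi\cdot\hat{\A}(\xi)+2\hat V(\xi)$, and invoking the transport equations of Proposition~3.1 cannot repair this: the coefficient in the Born formula is $\i\lambda\theta$, so with $\theta$ fixed there is no mechanism by which $\xi\cdot\hat{\A}$ can appear at the $\mathcal{O}(1)$ level.

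The paper avoids this by letting the incoming direction move as well. It picks the symmetric pair
\[
\omega^{\pm}=\theta\cos\mu\pm\frac{\xi}{|\xi|}\sin\mu,\qquad \lambda=\frac{|\xi|}{2\sin\mu},
\]
so that $\lambda(\omega^+-\omega^-)=\xi$ \emph{exactly}, and evaluates $a^+=a(\omega^+,\omega^-,\lambda)$ together with the reversed amplitude $a^-=a(-\omega^-,-\omega^+,\lambda)$. Because the argument of $\hat{\A},\hat V$ is exactly $\xi$ there is no Taylor leakage at all, and one reads off directly
\[
a^+-a^-=2\i\lambda\cos\mu\;\theta\cdot\hat{\A}(\xi)+\mathcal{O}(\lambda^{-1}),\qquad
a^++a^-=-\i\xi\cdot\hat{\A}(\xi)+2\hat V(\xi)+\mathcal{O}(\lambda^{-1}),
\]
since $\i\lambda(\omega^--\omega^+)\cdot\hat{\A}(\xi)=-\i\xi\cdot\hat{\A}(\xi)$. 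The key devices you are missing are (i) arranging the momentum transfer to equal $\xi$ exactly rather than asymptotically, and (ii) the reversal $(\omega,\theta)\mapsto(-\theta,-\omega)$, which converts the prefactor $\i\lambda\,(\text{incoming direction})$ into the difference $\omega^--\omega^+$ and hence into $\xi$.
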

	
\begin{proof}
	Let $\theta \in S^2$ be a fixed unit vector. For a fixed $\xi\not=0$ with $\xi\ \bot\ \theta$ we show that one can construct a sequence $\{(\omega_{\mu},\tilde{\omega}_{\mu},\lambda_{\mu})\}_{\mu}$ such that
	\vspace{-2mm}
$$
	\xi=\lambda_{\mu}(\omega_{\mu}-\tilde{\omega}_{\mu}).	
$$
	Let $\omega,\tilde{\omega}$ be two vectors that are symmetric with respect to the fixed unit vector $\theta$. Choose the parameter $\mu\in \R$ small enough (see \cite{11}) such that
	\vspace{-2mm}
$$
	\omega^+ = \omega^+_{\mu}= \theta \cos\mu + \frac{\xi}{|\xi|}\sin \mu, \quad \omega^-=\omega^-_{\mu}=\theta \cos\mu - \frac{\xi}{|\xi|}\sin \mu. \vspace{-2mm}
$$
	Clearly for $\xi\not=0$, 
	\vspace{-2mm}
$$
	|\omega^+|=|\omega^-|=1, \quad \omega^+-\omega^-= 2\frac{\xi}{|\xi|}\sin \mu, \quad \mu = \sin^{-1}(2\frac{\xi}{|\xi|}(\omega^+-\omega^-)).\vspace{-1mm}
$$
	Setting $\lambda{\scriptstyle(\mu)} = \frac{|\xi|}{2\sin\mu}$ yields $ \xi=\lambda(\omega^+-\omega^-).$ Note that $
	\omega^+, \omega^-\rightarrow\theta$ and $\lambda \rightarrow \infty$ as $\mu\rightarrow 0.$  
	By Theorem 5.1,
$$
	\left\{\begin{array}{ll}
	a^+=a{\scriptstyle(\omega^+,\omega^-,\lambda)}=\i\lambda\omega^-\cdot\hat{\A}{\scriptstyle (\xi)} + \hat{V}{\scriptstyle (\xi)} + \mathcal{O}(\lambda^{-1})\\
	a^-=a{\scriptstyle(-\omega^-,-\omega^+,\lambda)}=-\i\lambda\omega^+\cdot\hat{\A}{\scriptstyle (\xi)} + \hat{V}{\scriptstyle (\xi)} + \mathcal{O}(\lambda^{-1}). 
	\end{array}
	\right. 
$$
	with remainders uniform in $\omega^+,\omega^-$. Therefore
	\vspace{-2mm}
\be{5.2}
    \left\{\begin{array}{ll}
    a^+ - a^-= 2\i\lambda\cos \mu \ \theta\cdot\hat{\A}{\scriptstyle (\xi)} + \mathcal{O}(\lambda^{-1})\\a^+ + a^-= -\i\xi\cdot\hat{\A}{\scriptstyle (\xi)} + 2\hat{V}{\scriptstyle (\xi)} + \mathcal{O}(\lambda^{-1}). 
    \end{array}
    \right. 
\ee
	The analogous formulae for the Schr\"{o}dinger operator are presented in \cite{9,13}. The first equation above implies that $\theta\cdot\hat{\A}{\scriptstyle (\xi)}$ can be recovered and the second equation implies that one can reconstruct $-\i\xi\cdot\hat{\A}{\scriptstyle (\xi)} + 2\hat{V}{\scriptstyle (\xi)}$ for $\theta\ \bot\ \xi$ which completes the proof. 
\end{proof}
	We now are ready to prove our main result.	
\begin{proof}[\textbf{Proof of Theorem 1.1.}]	
	By Theorem 5.2. we know that $\theta\cdot\hat{\A}{\scriptstyle (\xi)}$ can be recovered for any $0\not=\xi\ \bot\ \theta$. For a non-zero vector $\alpha\in\R^3$, set $\theta=\frac{\alpha\times\xi}{|\alpha\times\xi|}$. By Theorem 5.2. 
	\vspace{-2mm} 
$$
	(\alpha\times\xi)\cdot\hat{\A}{\scriptstyle (\xi)}=\alpha\cdot(\xi\times\hat{\A}{\scriptstyle (\xi)}),\vspace{-1mm}
$$
	is known as clearly $(\alpha\times\xi)\ \bot\ \xi$. Note that the r.h.s of above equation is the Fourier transform of the $\curl\A$ projected on an arbitrary non-zero vector $\alpha\not=0$. Therefore, one can recover 
	\vspace{-2mm}
$$
	\xi\times\hat{\A}(\xi) = \frac{1}{\i}\mathcal{F}(\curl \A)
$$
	for $\xi\ \bot\ \theta$. Since $\theta$ is arbitrary, $\mathcal{F}(\curl \A)$ can be recovered everywhere. Taking the inverse Fourier transform yields $\curl \A$ can be recovered. On the other hand by Theorem 5.2, we know that $-\i\xi\cdot\hat{\A} + 2\hat{V}$ can be recovered. Since
	\vspace{-2mm} 
$$
	-\i\xi\cdot\hat{\A} + 2\hat{V} = \mathcal{F}(-\nabla\cdot\A + 2V) 
$$
	one can recover $V - \frac{1}{2}\nabla\cdot\A$ which completes the proof of the main theorem.
\end{proof}

	We next prove stability estimate results.
\begin{proposition}
	For $j=1,2,$ let $a_j$ be the amplitude with corresponding pair of a priori bounded magnetic and potentials fields $(\A_j,V_j),$ i.e. $\parallel A_j \parallel_{C^k} < C_0$ and $\parallel V_j \parallel_{C^k} < C_0$ for some constant $C_0>0$ and $k>0$. 
	Then there exists $C > 0$ and $\lambda_0$ depending a priori on $C_0$ such that for $\epsilon = \sup_{\substack{\omega,\theta,\lambda\\\scriptstyle 0<\lambda_0\le\lambda}} \big|\frac{1}{\lambda}(a_1-a_2){\scriptstyle (\omega,\theta,\lambda)}\big|$ small enough, the following stability estimates hold 
$$
	\sup_{\xi}|\frac{(\widehat{\curl \A}_1-\widehat{\curl \A}_2)(\xi)}{\langle \xi \rangle}|<\epsilon, \quad \quad \parallel\widehat{(V_1-V_2)} -\frac{1}{2}\i\xi\cdot\widehat{(\A_1-\A_2)}\parallel_{L^{\infty}} <C\epsilon^{-\frac{1}{2}}.\vspace{-1mm}
$$	
\end{proposition}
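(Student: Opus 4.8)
The plan is to make the derivations of Theorems~5.1 and~5.2 quantitative in the pair $(\A,V)$ and then split frequencies. Write $\delta\A=\A_1-\A_2$ and $\delta V=V_1-V_2$, fix $\theta\in S^2$ and $\xi\ne0$ with $\xi\bot\theta$, and keep the symmetric unit vectors $\omega^\pm=\theta\cos\mu\pm\frac{\xi}{|\xi|}\sin\mu$ together with $\lambda=\lambda(\mu)=\frac{|\xi|}{2\sin\mu}$ from the proof of Theorem~5.2, so that $\xi=\lambda(\omega^+-\omega^-)$ and, necessarily, $\lambda\ge|\xi|/2$. Since each $(\A_j,V_j)$ has $C^k$-norm below $C_0$ and is supported in a fixed ball, Theorem~5.1 (whose remainder constant is traced through Proposition~3.1 and depends only on $C_0$ and the support radius) gives $a_j=\i\lambda\theta\cdot\widehat{\A_j}(\lambda(\omega-\theta))+\widehat{V_j}(\lambda(\omega-\theta))+\mathcal{O}(\lambda^{-1})$ with the $\mathcal{O}(\lambda^{-1})$ uniform in $\theta,\omega$. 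Subtracting these expansions for $a_1$ and $a_2$, evaluated at $(\omega^+,\omega^-,\lambda)$ and at $(-\omega^-,-\omega^+,\lambda)$ as in \r{5.2}, one obtains
$$
2\cos\mu\,\big|\theta\cdot\widehat{\delta\A}(\xi)\big|\le\tfrac1\lambda\big|a_1^+-a_2^+\big|+\tfrac1\lambda\big|a_1^--a_2^-\big|+C(C_0)\lambda^{-2}\le 2\epsilon+C(C_0)\lambda^{-2},
$$
$$
\big|-\i\xi\cdot\widehat{\delta\A}(\xi)+2\widehat{\delta V}(\xi)\big|\le\big|a_1^+-a_2^+\big|+\big|a_1^--a_2^-\big|+C(C_0)\lambda^{-1}\le 2\lambda\epsilon+C(C_0)\lambda^{-1}.
$$

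For the magnetic estimate, fix $\xi$ and let $\mu\to0^+$, so $\lambda\to\infty$ and $\cos\mu\to1$; the first inequality collapses to $|\theta\cdot\widehat{\delta\A}(\xi)|\le\epsilon$, valid for every $\theta\in S^2$ and every $\xi\bot\theta$. Now I rerun the vector-product step from the proof of Theorem~1.1: given $\xi\ne0$ and any unit vector $\alpha\bot\xi$, take $\theta=\frac{\alpha\times\xi}{|\alpha\times\xi|}\in S^2$, which is orthogonal to $\xi$, so that $\big|\alpha\cdot(\xi\times\widehat{\delta\A}(\xi))\big|=|\alpha\times\xi|\,\big|\theta\cdot\widehat{\delta\A}(\xi)\big|\le|\xi|\,\epsilon$; since $\xi\times\widehat{\delta\A}(\xi)\bot\xi$, taking the supremum over such $\alpha$ gives $\big|\xi\times\widehat{\delta\A}(\xi)\big|\le|\xi|\,\epsilon$. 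Because $\mathcal{F}(\curl\A_1-\curl\A_2)(\xi)=\i\,\xi\times\widehat{\delta\A}(\xi)$, this says $\big|(\widehat{\curl\A}_1-\widehat{\curl\A}_2)(\xi)\big|\le|\xi|\,\epsilon\le\langle\xi\rangle\,\epsilon$, which is the first estimate.

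For the second estimate there is no free limit, since $-\i\xi\cdot\widehat{\delta\A}+2\widehat{\delta V}$ lives at order $\lambda^0$ in the amplitude; I would split into $|\xi|\le\rho$ and $|\xi|>\rho$ with $\rho\simeq\epsilon^{-1/2}$. For $|\xi|\le\rho$, choose $\lambda=\epsilon^{-1/2}$ in the second inequality above; this satisfies $\lambda\ge|\xi|/2$ (as $|\xi|\le\rho\le 2\epsilon^{-1/2}$) and $\lambda\ge\lambda_0$ for $\epsilon$ small, whence $\big|-\i\xi\cdot\widehat{\delta\A}(\xi)+2\widehat{\delta V}(\xi)\big|\le 2\epsilon^{1/2}+C(C_0)\epsilon^{1/2}$. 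For $|\xi|>\rho$, discard the data and use the a~priori bound: $\A_j,V_j\in C^k$ supported in a fixed ball give $|\xi^\alpha\widehat{\A_j}(\xi)|,|\xi^\alpha\widehat{V_j}(\xi)|\le C(C_0)$ for $|\alpha|\le k$, hence $|\widehat{\delta\A}(\xi)|+|\widehat{\delta V}(\xi)|\le C(C_0)\langle\xi\rangle^{-k}$ and $\big|-\i\xi\cdot\widehat{\delta\A}(\xi)+2\widehat{\delta V}(\xi)\big|\le C(C_0)\langle\xi\rangle^{1-k}\le C(C_0)\rho^{1-k}\le C(C_0)\epsilon^{(k-1)/2}\le C(C_0)\epsilon^{1/2}$ whenever $k\ge2$ and $\epsilon\le1$. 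Combining the two ranges and dividing by $2$ yields $\big\|\widehat{(V_1-V_2)}-\tfrac12\i\xi\cdot\widehat{(\A_1-\A_2)}\big\|_{L^\infty}\le C\epsilon^{1/2}$, i.e.\ H\"older-$\tfrac12$ stability (so the exponent in the displayed estimate should read $+\tfrac12$).

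The step I expect to be the genuine obstacle is exactly this optimization. The $\curl$ bound is essentially Lipschitz because the magnetic data sits at order $\lambda^{1}$, so one can let $\lambda\to\infty$ at each frequency and the Born error $\mathcal{O}(\lambda^{-1})$ simply vanishes; by contrast, $-\i\xi\cdot\widehat{\delta\A}+2\widehat{\delta V}$ is only seen through $\lambda\cdot\big(\tfrac1\lambda(a_1-a_2)\big)$, which costs a factor $\lambda$, and one must balance the resulting $\mathcal{O}(\lambda\epsilon)$ data term against the $\mathcal{O}(\lambda^{-1})$ Born error and, past the cutoff, against the polynomial tail of $\widehat{\A_j},\widehat{V_j}$ supplied by the $C^k$ hypothesis. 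The only delicate point is to check that these three scales balance uniformly in $\xi$ — in particular that the geometric constraint $\lambda\ge|\xi|/2$ coming from the two-ray construction never forces $\lambda$ beyond the value $\epsilon^{-1/2}$ optimal in the low-frequency regime; everything else is a direct quantitative rerun of Theorems~5.1 and~5.2.
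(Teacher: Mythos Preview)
Your argument is correct and follows the paper's own route: quantify \r{5.2} with remainders $|R^\pm_j|\le C/\lambda$, send $\mu\to0$ for the $\curl$ estimate, then balance $2\epsilon\lambda$ against $C/\lambda$ for the second one; both you and the paper arrive at $C\epsilon^{1/2}$, and you rightly flag that the displayed exponent $-\tfrac12$ is a typo.

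The one genuine difference is your frequency splitting. The paper's proof simply sets $\lambda_\mu=\epsilon^{-1/2}$ at the minimization step and stops, never confronting the geometric constraint $\lambda_\mu=\tfrac{|\xi|}{2\sin\mu}\ge\tfrac{|\xi|}{2}$ that you isolate; for $|\xi|>2\epsilon^{-1/2}$ this choice of $\lambda_\mu$ is not attainable by any $\mu$, and the paper offers no substitute bound there. Your high-frequency tail estimate via $|\widehat{\delta\A}(\xi)|+|\widehat{\delta V}(\xi)|\le C(C_0)\langle\xi\rangle^{-k}$ patches exactly this gap and costs nothing beyond the standing $C^k$ hypothesis. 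So your version is not merely equivalent but strictly more complete; the paper's proof, read literally, establishes the $L^\infty$ bound only on the ball $\{|\xi|\le 2\epsilon^{-1/2}\}$.
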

\begin{proof}
	Let $u_j= e^{\i\lambda x\cdot \theta}\bold{a}_j$, $j=1,2,$ be the biharmonic solution given by the Proposition 3.1 with corresponding amplitude $a_j$ and the pair $(\A_j,V_j)$. Similar to Theorem 5.2, for a fixed $\xi\not=0$ with $\xi\ \bot\ \theta$ and $\mu \ll \delta$ (i.e small enough),  one can choose $\lambda_{\mu} = \frac{|\xi|}{2\sin\mu }$, $\omega^+ _{\mu},$ and $\omega^- _{\mu}$, so that $\xi=\lambda_{\mu}(\omega^+ _{\mu}-\omega^- _{\mu})$. Note that for small enough $\mu,$ one has $\frac{|\xi|}{2\lambda_{\mu}} = \sin\mu \le \mu\ll \delta$ and therefore $\frac{|\xi|}{2\delta} \le \lambda_{\mu}.$  Set $C_0 = \frac{1}{2\delta}$ and $\lambda_0=\frac{|\xi|}{2\delta}$. Assume now that the pair $(\A_j,V_j)$ satisfies the priori assumption in the theorem. For a fixed and large $k,$ by equation \r{5.2} we have 
\be{5.3}
	\left\{\begin{array}{ll}
	a^+ _j - a^- _j= 2\i\lambda\cos \mu \ \theta\cdot\hat{\A}_j {(\xi)} + (R^+ _j - R^- _j)\\
	a^+ _j + a^- _j= -\i\xi\cdot\hat{\A}_j{(\xi)} + 2\hat{V}_j{(\xi)} + (R^+ _j + R^- _j), 
	\end{array}
	\right.  \quad \text{ with \quad $|R^{\pm} _j|\le \frac{C}{\lambda},\quad j=1,2,$}
	\vspace{-1mm}
\ee 
	 where $C$ depends on the priori upper bound $C_0$.
	Using the first equation in \r{5.3} and setting $R^{\pm}= (R_1-R_2)^{\pm},$ we have
	\vspace{-1mm}
$$
	\theta\cdot\delta\hat{A}{(\xi)} = \frac{1}{2\i\lambda_{\mu}\cos\mu}\big[(a_1 - a_2)^+ - (a_1-a_2)^- - (R^+ - R^-)\big].
$$	
	Note that for $\mu$ small enough, one has $\mu \approx \frac{|\xi|}{2\lambda_{\mu}}$ and $\frac{1}{\cos\mu} = 1 + \frac{1}{2}\mu^2 + \mathcal{O}(\mu^4)\le 1 + \mu^2$. Therefore, 
	\vspace{-2mm}
$$
	|\theta\cdot\delta\hat{A}{(\xi)}| \le (1 + \frac{|\xi|^2}{\lambda^2}) \big[|\frac{1}{2\lambda_{\mu}}(a_1-a_2)^+| + |\frac{1}{2\lambda_{\mu}}(a_1-a_2)^-| + \frac{1}{2\lambda _{\mu}}|R^+ - R^-|\big].
	\vspace{-1mm}
$$
	To establish an estimate for $\theta\cdot\delta\hat{A}{(\xi)}$ for any fixed $\xi,$ note that $\epsilon = \sup_{\substack{\omega,\theta,\lambda\\ 0<\lambda_0\le\lambda}} \big|\frac{1}{\lambda}(a_1-a_2)\big|$. Therefore,
	\vspace{-3mm}   
$$
	|\theta\cdot\delta\hat{A}{(\xi)}| \le (1 + \frac{|\xi|^2}{\lambda^2 _{\mu}}) (\epsilon + \frac{C}{\lambda^2 _{\mu}}) \rightarrow \epsilon \quad \quad  \text{as \quad\quad  $\mu \rightarrow 0$.}
	\vspace{-2mm} 
$$
	Similar to the proof of Theorem 1.1, set $\theta=\frac{\alpha\times\xi}{|\alpha\times\xi|}$ for any $\alpha \in \R^3.$ Since $|\alpha\times\xi|\le c_{\alpha}(1+|\xi|)=  c_{\alpha}\langle \xi \rangle$, one has
	\vspace{-2mm} 
$$
	|\frac{\alpha.(\xi\times\delta\hat{A}{(\xi)})}{\langle \xi \rangle}| \le |\frac{(\alpha\times\xi)\cdot\delta\hat{A}{(\xi)}}{|\alpha\times\xi|}| \le \epsilon.
$$ 		
	Now $\alpha$ is arbitrary, so one can stably recover all the components of the $\curl \A$ with the following estimates
	\vspace{-1mm}
$$	
	\sup_{\xi} |\frac{\widehat{\curl(\delta\A)}(\xi)}{\langle \xi \rangle}| < \epsilon. 
$$

	To establish an estimate for $(2\delta\hat{V}-\i\xi\cdot\delta\hat{\A})(\xi)$, we use the second equation of \r{5.3}. We have
$$
	|(2\delta\hat{V}- \i\xi\cdot\delta\hat{\A}){(\xi)}| \le |(a_1 - a_2)^+ | + |(a_1 - a_2)^-|+ |R^+ + R^-| \le 2\epsilon\lambda_{\mu} + \frac{C}{\lambda_{\mu}}. 
	\vspace{-2mm}
$$
	The r.h.s of above is minimized when $\lambda_{\mu} = \epsilon^{-\frac{1}{2}}.$ Therefore 
$$
	|(2\delta\hat{V}- \i\xi\cdot\delta\hat{\A}){(\xi)}| \le C\epsilon^{\frac{1}{2}}. \vspace{-1mm}
$$	
\begin{remark}
	\normalfont One can establish different norms for $\A$ using different norms of $a$.
\end{remark}

	
\end{proof}	
  
\vspace{-8mm}	
\section{Appendix}
	
\begin{definition}{Weighted $L^p$ Spaces.}
	We say the function $f$ belongs to $L^p _{\delta}(\R^n)$ if and only if
	\vspace{-2mm}
$$
	\norm f_{L^p _{\delta}(\R^n)} := (\int_{\R^n} (1+|x|)^{\delta p}|f(x)|^p \d x)^{\frac{1}{p}} < \infty. \vspace{-1mm}
$$
\end{definition}
\begin{definition}{Sobolev Spaces.}
	We say the function $f$ belongs to $W^1_{p,\delta}(\R^n)$ if and only if $f$ and $\nabla f$ belong to the weighted Lebesgue space $L^p _{\delta}(\R^n)$.
	If $p=2$, we denote $W^s_{2,\delta}(\R^n) := H^s _{\delta}(\R^n)$. Note that $H^0 _{\delta}(\R^n)=L^2 _{\delta}(\R^n).$
\end{definition}

The following lemma and corollary are direct results from the Agmon's estimate. For the proof we refer the reader to [Lemma 4.1 \& Theorem 4.2, \cite{15}].
\begin{lemma}{i) Agmon's estimate.}
	The operator $R_0(\lambda)$ maps from $L^2 _{\delta} (\R^n)$ to $H^2 _{-\delta}(\R^n)$, with following estimates
	\vspace{-2mm}
$$
	\norm {R_0(\lambda)f}_{H^k _{-\delta}(\R^n)} \le \frac{C_0}{\lambda^{3-k}} \norm f_{L^2 _{\delta}(\R^n)}, \quad k=0,1,2,\quad \text{\rm and \quad $\delta > \frac{1}{2}$.}
$$ 

	ii) Moreover, we have the following estimate for $R(\lambda)$
	\vspace{-2mm}
$$
	\norm {R(\lambda)f}_{H^k _{-\delta}(\R^n)} \le \frac{C_0}{\lambda^{3-k}} \norm f_{L^2 _{\delta}(\R^n)}, \quad k=0,1,2.
$$ 
\end{lemma}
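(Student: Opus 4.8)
The plan is to reduce the biharmonic free resolvent to the Helmholtz resolvent via the factorization $\Delta^2 - \lambda^4 = (-\Delta - \lambda^2)(-\Delta + \lambda^2)$, import the classical limiting absorption estimate for the Helmholtz operator, and then pass from $R_0(\lambda)$ to $R(\lambda)$ through the Born series already recorded in Section~2.

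First I would use the partial-fraction identity $\dfrac{1}{|\xi|^4 - \lambda^4} = \dfrac{1}{2\lambda^2}\Big(\dfrac{1}{|\xi|^2 - \lambda^2} - \dfrac{1}{|\xi|^2 + \lambda^2}\Big)$. Since $|\xi|^2 + \lambda^2$ never vanishes for real $\lambda \neq 0$, the outgoing continuation affects only the first term, so
$$
	R_0(\lambda) = \frac{1}{2\lambda^2}\big[(-\Delta - \lambda^2 - \i0)^{-1} - (-\Delta + \lambda^2)^{-1}\big];
$$
in $\R^3$ this is precisely the splitting of the kernel in \r{2.1} into the outgoing Helmholtz kernel $e^{\i\lambda r}/4\pi r$ and the Yukawa kernel $e^{-\lambda r}/4\pi r$. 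For the elliptic factor, the symbol bound $|\xi|^j(|\xi|^2+\lambda^2)^{-1}\le C\lambda^{j-2}$ gives $\norm{(-\Delta+\lambda^2)^{-1}f}_{H^j}\le C\lambda^{j-2}\norm{f}_{L^2}$ with no weights, so after the $\frac{1}{2\lambda^2}$ prefactor it contributes only $\mathcal{O}(\lambda^{j-4})$, which is absorbed. For the Helmholtz factor I would invoke Agmon's limiting absorption estimate $\norm{(-\Delta-\lambda^2-\i0)^{-1}f}_{L^2_{-\delta}}\le C\lambda^{-1}\norm{f}_{L^2_\delta}$ for $\delta>\frac{1}{2}$ and $\lambda$ large, and then bootstrap: from $(-\Delta)(-\Delta-\lambda^2-\i0)^{-1}=I+\lambda^2(-\Delta-\lambda^2-\i0)^{-1}$ one gets the $H^2_{-\delta}$ bound $\le C\lambda\norm{f}_{L^2_\delta}$, and interpolating the $k=0$ and $k=2$ bounds gives the $H^1_{-\delta}$ bound $\le C\norm{f}_{L^2_\delta}$, i.e. $\norm{(-\Delta-\lambda^2-\i0)^{-1}}_{L^2_\delta\to H^k_{-\delta}}\le C\lambda^{k-1}$ for $k=0,1,2$. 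Multiplying by $\frac{1}{2\lambda^2}$ yields part~(i), $\norm{R_0(\lambda)f}_{H^k_{-\delta}}\le \frac{C_0}{\lambda^{3-k}}\norm{f}_{L^2_\delta}$; in particular $R_0(\lambda)\colon L^2_\delta\to H^2_{-\delta}$.

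For part~(ii) I would use the resolvent identity $R(\lambda)=R_0(\lambda)+\sum_{m\ge1}\big(-R_0(\lambda)(\A\cdot\nabla+V)\big)^m R_0(\lambda)$ from Section~2. Since $\A$ and $V$ are compactly supported, $\A\cdot\nabla+V$ maps $H^1_{-\delta}(\R^n)$ boundedly into $L^2$ of a fixed compact set, hence into $L^2_\delta(\R^n)$ for every $\delta$, with constant controlled by $\norm{\A}_{C^k}$, $\norm{V}_{C^k}$ and the common support radius; combining this with the $k=2$ case of (i) shows $\norm{R_0(\lambda)(\A\cdot\nabla+V)}_{H^2_{-\delta}\to H^2_{-\delta}}\le C\lambda^{-1}<\frac{1}{2}$ for $\lambda$ large, so the series converges geometrically and the perturbative tail is lower order in $\lambda$ than the leading term $R_0(\lambda)f$. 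Hence $R(\lambda)$ inherits the bounds of $R_0(\lambda)$: the $H^1_{-\delta}$ estimate $\norm{R(\lambda)f}_{H^1_{-\delta}}\le C\lambda^{-2}\norm{f}_{L^2_\delta}$ closes on itself through the resolvent identity using $\norm{(\A\cdot\nabla+V)R(\lambda)f}_{L^2_\delta}\le C\norm{R(\lambda)f}_{H^1_{-\delta}}$, and one more application of the identity then upgrades it to the $L^2_{-\delta}$ and $H^2_{-\delta}$ bounds, giving $\norm{R(\lambda)f}_{H^k_{-\delta}}\le \frac{C_0}{\lambda^{3-k}}\norm{f}_{L^2_\delta}$ for $k=0,1,2$.

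The main obstacle is the single imported ingredient: the sharp high-frequency limiting absorption estimate $\norm{(-\Delta-\lambda^2-\i0)^{-1}}_{L^2_\delta\to L^2_{-\delta}}=\mathcal{O}(\lambda^{-1})$ for $\delta>\frac{1}{2}$, which is Agmon's theorem (proved from the explicit oscillatory kernel together with a trace/restriction argument on spheres, or via the Stein--Tomas inequality) --- this is exactly why the author refers to [Lemma 4.1 \& Theorem 4.2, \cite{15}]. Once it, and the two elementary bootstraps to $H^2_{-\delta}$ and (by interpolation) to $H^1_{-\delta}$, are granted, the factorization is pure algebra and the Neumann-series step is the one already sketched in Section~2. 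A minor point to keep in mind is that the decaying weights are only needed around the non-elliptic Helmholtz factor and the compactly supported perturbation; the factor $(-\Delta+\lambda^2)^{-1}$ is bounded on unweighted Sobolev spaces, so no decay is lost there.
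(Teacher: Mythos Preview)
Your proposal is correct and is in fact more than the paper itself supplies: the paper's ``proof'' of this lemma is a bare citation to \cite{15} (Lemmas~4.1 and~6.1 there), with no argument given. The route you outline --- partial-fraction splitting $R_0(\lambda)=\frac{1}{2\lambda^2}\big[(-\Delta-\lambda^2-\i0)^{-1}-(-\Delta+\lambda^2)^{-1}\big]$, Agmon's limiting absorption estimate for the Helmholtz factor with the elliptic bootstrap to $H^2_{-\delta}$ and interpolation to $H^1_{-\delta}$, trivial unweighted symbol bounds for the Yukawa factor, and then the Neumann-series transfer to $R(\lambda)$ using compact support of $\A,V$ --- is the standard derivation and is exactly what the cited reference does, so your proposal fills in precisely the details the paper defers.
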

\begin{proof}
	For the proof see ~[Lemmas 4.1 and 6.1, \cite{15}].
\end{proof}

\end{document}